\newtheorem{theorem}{Theorem}[section]
\newtheorem{proposition}[theorem]{Proposition}
\newtheorem{lemma}[theorem]{Lemma}
\newtheorem{corollary}[theorem]{Corollary}
\numberwithin{equation}{section}
\theoremstyle{definition}
\newtheorem{definition}[theorem]{Definition}
\newtheorem{example}[theorem]{Example}
\newtheorem{remark}[theorem]{Remark}
\newcommand{\Q}{\mathbb{Q}}
\newcommand{\Z}{\mathbb{Z}}
\newcommand{\GL}{\operatorname{GL}}
\newcommand{\input{tree_1}}{\input{tree_1}}
\newcommand{\abcd}{\begin{pmatrix} a & b \\ c & d \end{pmatrix}}
\newcommand{\pmatr}[1]{\begin{pmatrix}#1\end{pmatrix}}
\newcommand{\dz}{\mathcal{D}} 
\newcommand{\de}{\mathcal{E}} 
\newcommand{\Genz}{\mathfrak{G}}
\newcommand{\zenz}{\mathfrak{z}}
\newcommand{\penz}{\mathfrak{P}}
\newcommand{\renz}{\mathfrak{R}}
\newcommand{\pzenz}{\mathfrak{p}}
\newcommand{\Benz}{\mathfrak{B}}
\newcommand{\Fenz}{\mathfrak{F}}
\newcommand{\Kenz}{\mathfrak{K}}
\DeclareRobustCommand{\gp}{\penz\!\genfrac{(}{)}{0pt}{}}
\newcommand\quotient[2]{
	\mathchoice
	{% \displaystyle
		\text{\raise1ex\hbox{$#1$}\Big/\lower1ex\hbox{$#2$}}%
	}
	{% \textstyle
		#1\,/\,#2
	}
	{% \scriptstyle
		#1\,/\,#2
	}
	{% \scriptscriptstyle  
		#1\,/\,#2
	}
}
\DeclareRobustCommand{\bi}{\genfrac{(}{)}{0pt}{}}
\title{Realizations of the formal double Eisenstein space}
\author{Henrik Bachmann}
\address{Graduate School of Mathematics,  Nagoya University, Nagoya, Japan.}
\email{henrik.bachmann@math.nagoya-u.ac.jp}
\author{Ulf K\"uhn}
\address{Fachbereich Mathematik, Universit\"at Hamburg, Hamburg, Germany.}
\email{kuehn@math.uni-hamburg.de}
\author{Nils Matthes}
\address{Mathematical Institute, University of Oxford, Oxford, United Kingdom.}
\email{nils.matthes@maths.ox.ac.uk}
\subjclass[2010]{Primary 11M32; Secondary 	11F11}
\begin{document}
\date{\today}
\maketitle

\begin{abstract}
We introduce the formal double Eisenstein space $\mathcal{E}_k$, which is a generalization of the formal double zeta space $\mathcal{D}_k$ of Gangl--Kaneko--Zagier, and prove analogues of the sum formula and parity result for formal double Eisenstein series. We show that $\mathbb Q$-linear maps $\mathcal{E}_k\rightarrow A$, for some $\mathbb Q$-algebra $A$, can be constructed from formal Laurent series (with coefficients in $A$) that satisfy the Fay identity. As the prototypical example, we define the Kronecker realization $\rho^{\mathfrak{K}}: \mathcal{E}_k\rightarrow \mathbb Q[[q]]$, which lifts Gangl--Kaneko--Zagier's Bernoulli realization $\rho^B: \mathcal{D}_k\rightarrow \mathbb Q$, and whose image consists of quasimodular forms for the full modular group. As an application to the theory of modular forms, we obtain a purely combinatorial proof of Ramanujan's differential equations for classical Eisenstein series.
\end{abstract}

\maketitle

\section{Introduction}

\emph{Double zeta values} are real numbers, defined for integers $k_1\geq 2$, $k_2\geq 1$, by
\[
\zeta(k_1,k_2)=\sum_{m>n>0}\frac{1}{m^{k_1}n^{k_2}} \, .
\]
The integer $k:=k_1+k_2$ is the \emph{weight} of $\zeta(k_1,k_2)$.
As was first observed by Euler, double zeta values arise naturally when multiplying the single zeta values $\zeta(k)=\sum_{n>0}n^{-k}$, for $k\geq 2$. More precisely, one has the identities
\begin{equation} \label{eqn:doubleshuffle}
\begin{aligned}
\zeta(k_1)\zeta(k_2)&=\zeta(k_1,k_2)+\zeta(k_2,k_1)+\zeta(k_1+k_2)\\
&=\sum_{j=2}^{k_1+k_2-1}\left(\binom{j-1}{k_1-1}+\binom{j-1}{k_2-1}\right)\zeta(j,k_1+k_2-j) \, ,
\end{aligned}
\end{equation}
valid for all $k_1,k_2\geq 2$, which are referred to as the \emph{double shuffle relations}.

A natural question to ask is which other relations among double zeta values can be deduced from \eqref{eqn:doubleshuffle}. In order to address this question, Gangl--Kaneko--Zagier introduced the (weight $k$) \emph{formal double zeta space} $\mathcal{D}_k$, which is $\mathbb Q$-linearly spanned by formal symbols $Z_{k_1,k_2}$, $P_{k_1,k_2}$, $Z_k$ (with $k=k_1+k_2$) subject to the double shuffle relations. Several other identities among double zeta values then hold in $\mathcal{D}_k$; for example, Euler's result (every double zeta value of odd weight can be written in terms of single zeta values and products thereof) is true in $\mathcal{D}_k$.\footnote{On the other hand, it is also proved in \cite{GKZ} that double zeta values satisfy `exotic' relations, which do not hold in $\mathcal{D}_k$, and which come from cusp forms for the full modular group.} 

Solutions to the double shuffle equations in a general $\mathbb Q$-algebra $A$ correspond to $\mathbb Q$-linear maps $\rho_k: \mathcal{D}_k\rightarrow A$, called \emph{realizations} (of $\mathcal{D}_k$). The archetypal example is the \emph{Euler realization} $\rho^E_k: \mathcal{D}_k\rightarrow \mathbb R$ that maps the generators of $\mathcal{D}_k$ to (double) zeta values in the natural way.\footnote{Some care must be taken in defining $\rho^E_k$ for $Z_{k_1,k_2}$, $P_{k_1,k_2}$ with $k_1=1$. For simplicity of the exposition, we ignore this issue in the introduction.} Another example is the \emph{Bernoulli realization} $\rho^B_k: \mathcal{D}_k\rightarrow \mathbb Q$, which sends the generator $Z_k$ to $-B_k/2k!$ and the $Z_{k_1,k_2}$ to certain linear combinations of products of Bernoulli numbers. The Bernoulli realization is of arithmetic interest as it gives an explicit construction of a rational associator in depth two, a problem raised by Drinfeld, \cite{Drinfeld:QuasiTriangular}.\footnote{More recently, Brown gave an explicit construction of a rational associator $(\tau^{(d)})_{1\leq d\leq 3}$ up to and including depth three, \cite{Brown:Depth3}. It differs from Gangl--Kaneko--Zagier's starting in weight $8$ (the smallest weight for which there exists a double zeta value that, conjecturally, cannot be written as a product of single zeta values).}

From the perspective of (quasi-)modular forms, Bernoulli numbers arise naturally as constant terms in the Fourier expansion of the classical Eisenstein series
\begin{align*}
	G_k(q) = -\frac{B_k}{2k!}+ \frac{1}{(k-1)!}\sum_{n=1}^\infty \frac{n^{k-1}q^n}{1-q^n}\,.
\end{align*}
Therefore, one may wonder whether there exists a realization with values in the algebra of quasimodular forms, which gives back the Bernoulli realization upon taking constant terms. In this paper, we address this problem by introducing the \emph{formal double Eisenstein space} $\mathcal{E}_k$, which is equipped with a canonical $\mathbb Q$-linear map $\pi_k: \mathcal{E}_k\rightarrow \mathcal{D}_k$. We construct a $\mathbb Q$-linear map $\rho^\Kenz_k: \mathcal{E}_k\rightarrow \mathbb Q[G_2,G_4,G_6]$, called the \emph{Kronecker realization}, which fits into a commutative diagram
\[
\begin{tikzcd}
\mathcal{E}_k \arrow{r}{\rho^\Kenz_k}\arrow{d}{\pi_k}& \mathbb Q[G_2,G_4,G_6]\arrow{d}{q\,\mapsto\,0}\\
\mathcal{D}_k \arrow{r}{\rho^B_k}& \mathbb Q \, ,
\end{tikzcd}
\]
where the right-hand vertical arrow maps $G_k$ to the constant term in its Fourier expansion.\footnote{The Kronecker realization should not be confused with the Eisenstein realization $\rho^{Eis}_k: \mathcal{D}_k\rightarrow \mathbb Q[[q]]$ constructed in \cite{GKZ}; indeed, the image of the latter consists of double Eisenstein series $G_{k_1,k_2}(q)$, which are in general not (quasi)modular forms. A lift of the Eisenstein realization to $\de_k$ was constructed in \cite{B2}.} The Kronecker realization is constructed from its namesake, the \emph{Kronecker function}
\[
\mathfrak{K}_q\bi{X}{Y}=-\frac{1}{2}\sum_{m=0}^\infty\frac{e^{-X-mY}q^m}{1-q^me^{-X}}+\frac{1}{2}\sum_{m=0}^\infty\frac{e^{Y+mX}q^m}{1-q^me^Y} \, ,
\]
which plays a central role in the study of elliptic functions (cf. \cite[p.73]{Weil}). For our purposes, its crucial property is the \emph{Fay identity}, \cite{BrownLevin,Mumford:Theta},
\[
\mathfrak{K}_q\bi{X_1}{Y_1}\mathfrak{K}_q\bi{X_2}{Y_2}+\mathfrak{K}_q\bi{-X_2}{Y_1-Y_2}\mathfrak{K}_q\bi{X_1+X_2}{Y_1}+\mathfrak{K}_q\bi{-X_1-X_2}{-Y_2}\mathfrak{K}_q\bi{X_1}{Y_1-Y_2}=0 \, ,
\]
familiar from the theory of theta functions and which, in this case, goes back essentially to Riemann (cf. \cite[Proposition 5]{Z}). More generally, we prove that every solution to the Fay identity can be used to construct a realization of the formal double Eisenstein space.

The formal double Eisenstein space $\mathcal{E}_k$ shares many similarities with the formal double zeta space $\mathcal{D}_k$. For example, we prove a sum formula (analogous to \cite[Theorem 1]{GKZ}) and also a parity result for $\mathcal{E}_k$. Moreover, the projection $\pi_k: \mathcal{E}_k\rightarrow \mathcal{D}_k$ admits a canonical, explicit, splitting $\sigma_k: \mathcal{D}_k\rightarrow \mathcal{E}_k$. Using these two maps, every realization $\rho: \mathcal{E}_k \rightarrow A$ induces one of $\mathcal{D}_k$ via pullback: $\rho\circ\sigma_k: \mathcal{D}_k\rightarrow A$; and vice versa with $\pi_k$ in place of $\sigma_k$. This establishes a close connection between the two spaces and its various realizations. 
As in the work of Gangl--Kaneko--Zagier, a key technical role is played by the modular group $\operatorname{SL}_2(\mathbb Z)$ and its action on polynomials. More precisely, Gangl--Kaneko--Zagier consider the standard right action of $\operatorname{SL}_2(\mathbb Z)$ on $\mathbb Q[Y_1,Y_2]$, whereas we consider a lift of this action to $\mathbb Q[X_1,X_2,Y_1,Y_2]$ which, on the $X$-variables, is the standard action conjugated by the element $S=\left(\begin{smallmatrix}0&-1\\1&0\end{smallmatrix}\right)$. We remark that, with completely different terminology, this modified action also plays a role in Ecalle's mould theory, where the Fay identity appears under the name \emph{tripartite equation}, \cite[\S 3.2]{Ecalle:Flexions}.

Finally, we mention that double zeta values are depth two versions of multiple zeta values and that the double shuffle relations naturally extend to multiple zeta values. In upcoming work \cite{BIM}, we will generalize the results of this paper by introducing the algebra of \emph{formal multiple Eisenstein series}. One can define a subalgebra of \emph{formal quasimodular forms} of this algebra for which many relations among modular forms can be proved on a purely formal level. Some results obtained in this work were announced in \cite{BBK} (with slightly different notation).
\smallskip

\noindent
\textbf{Acknowledgements:} This project started when the first-named author visited both the University of Hamburg and University of Oxford in the Summer of 2019; he would like to thank both institutions, and in particular Francis Brown, for hospitality. This project is partially supported by JSPS KAKENHI Grant 19K14499 and has received funding from the European Research Council (ERC) under the European Union's Horizon 2020 research and innovation programme (grant agreement No. 724638).

\section{Formal double Eisenstein space}
Motivated by the relations \eqref{eqn:doubleshuffle}, Gangl, Kaneko and Zagier, \cite{GKZ}, defined the \emph{formal double zeta space}, for an integer $k\geq 1$, to be the $\Q$-vector space
\begin{align*}
	\dz_k = \quotient{\big \langle Z_k , Z_{k_1,k_2}, P_{k_1,k_2} \mid k_1 + k_2 = k, k_1,k_2 \geq 1 \big \rangle_\Q}{\eqref{eq:dzrel} }\,,        
\end{align*}
which is spanned by formal symbols $Z_k$,  $Z_{k_1,k_2}$, and $P_{k_1,k_2}$, satisfying the relations 
\begin{align}
	\begin{split}
		\label{eq:dzrel} 
		P_{k_1,k_2} &= Z_{k_1, k_2} +  Z_{k_2, k_1} + Z_{k_1+k_2} \\
		&= \sum_{j=1}^{k_1+k_2-1} \left( \binom{j-1}{k_1-1} + \binom{j-1}{k_2-1} \right) Z_{j, k_1+k_2-j}\,.
	\end{split}
\end{align}
The following definition generalizes the formal double zeta space.
\begin{definition} \label{def:fdes}
For an integer $K\geq 1$, we define the \emph{formal double Eisenstein space} of weight $K$ to be 
	\begin{align*}
		\de_K = \quotient{\Big \langle G\bi{k}{d} , G\bi{k_1,k_2}{d_1,d_2}, P\bi{k_1,k_2}{d_1,d_2} \Bigm\vert \begin{array}{cc}
 k+d = k_1 + k_2+d_1+d_2 =K \\k, k_1,k_2 \geq 1,\, d, d_1,d_2\geq 0 		\end{array}\Big \rangle_\Q}{\eqref{eq:derel} }    \,,
	\end{align*}
	where we divide out the following relations 
	{\small
	\begin{align}\begin{split} \label{eq:derel}
			P\bi{k_1,k_2}{d_1,d_2}  &=  G\bi{k_1,k_2}{d_1,d_2} +G\bi{k_2,k_1}{d_2,d_1} +G\bi{k_1+k_2}{d_1+d_2} \\
			&=  \sum_{\substack{l_1+l_2=k_1+k_2\\ e_1+e_2=d_1+d_2\\l_1,l_2\geq 1, e_1,e_2\geq 0}}\!\!\! \left(\binom{l_1-1}{k_1-1}\binom{d_1}{e_1}(-1)^{d_1-e_1} +   \binom{l_1-1}{k_2-1}\binom{d_2}{e_1} (-1)^{d_2-e_1}  \right) G\bi{l_1,l_2}{e_1,e_2} \\&+\frac{d_1! d_2!}{(d_1+d_2+1)!}\binom{k_1+k_2-2}{k_1-1}G\bi{k_1+k_2-1}{d_1+d_2+1}\,.
		\end{split}
	\end{align}}
\end{definition}

\begin{remark} The origin of the formal double Eisenstein space lies in the study of the algebraic structure of certain $q$-analogues of multiple zeta values, introduced by the first author in \cite{B1}. A survey of these $q$-analogues and motivation for the definition of the formal double Eisenstein space can be found in \cite{B2}.
\end{remark}

In weight $K=1$, there is exactly one element $G\bi{1}{0}$ and no relations. For $K=2$, one has $P\bi{1,1}{0,0} = 2 G\bi{1,1}{0,0} + G\bi{2}{0} =  2 G\bi{1,1}{0,0} + G\bi{1}{1}$, which gives $G\bi{2}{0}=G\bi{1}{1}$, and $\dim_\Q \de_2 = 2$. This is slightly different to the formal double zeta space, where $Z_2 = 0$ and $\dim_\Q \dz_2 = 1$. Calculating all relations in small weights, we obtain the following table
	\begin{center}
	\begin{tabular}{|c|c|c|c|c|c|c|c|c|c|c|c|c|}
		\hline
		 $k$          & 1 & 2 & 3 & 4 & 5 & 6 & 7 & 8 & 9 & 10 & 11 & 12 \\ \hline
		$\dim_\Q \dz_k$     & 1 & 1 & 2 & 2 & 3 & 3 & 4 & 4 & 5 & 5  & 6 & 6 \\ \hline
		$\dim_\Q \de_k$     & 1 & 2 & 5 & 8 & 15& 22 & 35 & 48 & 69 & 90 & 121 & 152 \\ \hline
	\end{tabular}
\end{center}

\begin{remark}
It is shown in \cite{GKZ} that $\dim_\Q \dz_k = \lfloor \frac{k+1}{2} \rfloor$, i.e., that the relations in \eqref{eq:dzrel} are linearly independent except for the symmetry coming from interchanging $k_1$ and $k_2$. We verified numerically that this also seems to be the case for the relations \eqref{eq:derel}, i.e., that these relations are all linearly independent except for the symmetry of interchanging $(k_1,d_1)$ and $(k_2,d_2)$. Counting generators and possible independent relations gives an explicit conjecture for the dimension of $\de_k$, which we will not discuss in this note. 
% the following conjectured generating series of $\dim_\Q \de_k $
% \begin{align*}
%     \sum_{k\geq 1} \dim_\Q \de_k X^k \overset{?}{=} \frac{X+X^5}{(1+X)^2 (1-X)^4}\,.
% \end{align*}
\end{remark}

\begin{definition}
Let $A$ be a $\Q$-algebra. An \emph{$A$-valued realization} of $\de_k$ is a $\Q$-linear map $\de_k\rightarrow A$. Likewise, an \emph{$A$-valued realization} of $\dz_k$ is a $\Q$-linear map $\dz_k\rightarrow A$.
\end{definition}
We will frequently omit explicit mention of $A$, if it is clear from context. 

In Section \ref{sec:realization}, we will construct explicit realizations of $\de_k$  for $A = \Q[[q]]$. Before doing so, we will compare the spaces $\de_k$ and $\dz_k$ in more detail. To work with the formal double Eisenstein space, it is convenient to consider all weights at the same time and to use generating series, which we define as follows
{\small 
	\begin{align*}
		\Genz_1\!\bi{X}{Y} := &\sum_{\substack{k\geq 1\\d \geq 0}} G\bi{k}{d} X_1^{k-1}   \frac{Y_1^{d}}{d!}  \,, \qquad 
		\Genz_2\!\bi{X_1,X_2}{Y_1,Y_2} := \sum_{\substack{k_1,k_2\geq 1\\d_1,d_2 \geq 0}} G\bi{k_1,k_2}{d_1,d_2} X_1^{k_1-1}  X_2^{k_2-1} \frac{Y_1^{d_1}}{d_1!}  \frac{Y_2^{d_2}}{d_2!}\,, \\
		\gp{X_1,X_2}{Y_1,Y_2} := &\sum_{\substack{k_1,k_2\geq 1\\d_1,d_2 \geq 0}} P\bi{k_1,k_2}{d_1,d_2} X_1^{k_1-1}  X_2^{k_2-1} \frac{Y_1^{d_1}}{d_1!}  \frac{Y_2^{d_2}}{d_2!}\,.	
\end{align*}} 
The defining equations \eqref{eq:derel} for the space $\de_k$ can then be written, for all $k \geq 1$, as
\begin{align}\label{eq:qdsh}
	\begin{split}
		\gp{X_1,X_2}{Y_1,Y_2}&=  \Genz_2\!\bi{X_1,X_2}{Y_1,Y_2} + \Genz_2\!\bi{X_2,X_1}{Y_2,Y_1} + \frac{\Genz_1\!\bi{X_1}{Y_1+Y_2} -\Genz_1\!\bi{X_2}{Y_1+Y_2}}{X_1-X_2} \\
		&= \Genz_2\!\bi{X_1+X_2, X_2}{Y_1, Y_2-Y_1}+\Genz_2\!\bi{X_1+X_2,X_1}{Y_2, Y_1-Y_2}  + \frac{\Genz_1\!\bi{X_1+X_2}{Y_1}-\Genz_1\!\bi{X_1+X_2}{Y_2}}{Y_1-Y_2} \,.
	\end{split}
\end{align}
Similarly, the defining equations \eqref{eq:dzrel} for the formal double zeta space can be expressed by using the generating series 
\begin{align*}
    \zenz_1(X) &:= \sum_{k\geq 1} Z_k X^{k-1}\,,\quad  \zenz_2(X_1,X_2) := \sum_{k_1,k_2\geq 1} Z_{k_1,k_2} X_1^{k_1-1} X_2^{k_2-1}\,,\\
    \pzenz(X_1,X_2) &:= \sum_{k_1,k_2\geq 1} P_{k_1,k_2} X_1^{k_1-1} X_2^{k_2-1}\,.
\end{align*}
Then \eqref{eq:dzrel} for all $k\geq 1$ is equivalent to
\begin{align}\begin{split}\label{eq:mouldfulldshdep2}
\pzenz(X_1,X_2) &= \zenz_2(X_1,X_2) + \zenz_2(X_2,X_1) + \frac{\zenz_1(X_1)- \zenz_1(X_2)}{X_1- X_2}\\
&= \zenz_2(X_1+X_2,X_2) + \zenz_2(X_1+X_2,X_1).
\end{split}
\end{align}

\begin{proposition}\label{prop:mappi} For all $k\geq 1$, there is a well-defined $\Q$-linear map $\pi_k: \de_k \rightarrow \dz_k$ given by
\begin{align*}
 \pi_k:   G\bi{k}{d} &\longmapsto \delta_{d,0} Z_k + \delta_{k,1} d! Z_{d+1}\,,\\
  \pi_k:  G\bi{k_1,k_2}{d_1,d_2} &\longmapsto \delta_{(d_1,d_2),(0,0)}\, Z_{k_1,k_2} + \delta_{(k_1,k_2),(1,1)}\, \sum_{\substack{a+b=d_1+d_2+2\\a,b \geq 1}} \frac{d_1! (a-1)! }{(a-1-d_2)!} Z_{a,b} \,.
\end{align*}
\end{proposition}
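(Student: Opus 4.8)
The plan is to set up $\pi_k$ on the free $\Q$-vector space on the generators of $\de_k$ and then verify that it annihilates the relations \eqref{eq:derel}, using their generating-series form \eqref{eq:qdsh} together with the generating-series form \eqref{eq:mouldfulldshdep2} of the relations defining $\dz_k$. On $\bG{k}{d}$ and $G\bi{k_1,k_2}{d_1,d_2}$ the map is prescribed by the statement; it is convenient to also record its value on the remaining generators, $\pi_k\bigl(P\bi{k_1,k_2}{d_1,d_2}\bigr) = \delta_{(d_1,d_2),(0,0)}\,P_{k_1,k_2} + \delta_{(k_1,k_2),(1,1)}\,d_1!\,d_2!\,P_{d_1+1,d_2+1}$, which in terms of generating series reads $\pi_k\bigl(\gp{X_1,X_2}{Y_1,Y_2}\bigr) = \pzenz(X_1,X_2) + \pzenz(Y_1,Y_2)$. (On $\de_k$ the map is of course already determined by its values on the $\bG{k}{d}$ and $G\bi{k_1,k_2}{d_1,d_2}$, since the $P$-generators become redundant once \eqref{eq:derel} holds.) Since $\pi_k$ is weight-homogeneous and everything is polynomial in each fixed weight, the whole verification reduces to an identity of generating series with coefficients in $\dz_k$.

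The key step is to compute $\pi_k$ on the two atomic generating series. Extracting coefficients, the summand $\delta_{d,0}Z_k$ in $\pi_k\bigl(\bG{k}{d}\bigr)$ contributes $\zenz_1(X)$ and the summand $\delta_{k,1}\,d!\,Z_{d+1}$ contributes $\zenz_1(Y)$, so
\[
\pi_k\!\left(\Genz_1\!\bi{X}{Y}\right) = \zenz_1(X) + \zenz_1(Y)\,.
\]
For the depth-two series, the $\delta_{(d_1,d_2),(0,0)}Z_{k_1,k_2}$-part obviously contributes $\zenz_2(X_1,X_2)$; for the remaining part one rewrites $\frac{1}{d_1!\,d_2!}\cdot\frac{d_1!\,(a-1)!}{(a-1-d_2)!} = \binom{a-1}{d_2}$, reindexes the double sum so that $d_1 + d_2 = a+b-2$ with $a,b\geq 1$, and collapses the inner sum by the binomial theorem $\sum_{d_2\ge 0}\binom{a-1}{d_2}Y_1^{a-1-d_2}Y_2^{d_2} = (Y_1+Y_2)^{a-1}$, obtaining
\[
\pi_k\!\left(\Genz_2\!\bi{X_1,X_2}{Y_1,Y_2}\right) = \zenz_2(X_1,X_2) + \zenz_2(Y_1+Y_2,Y_1)\,.
\]
Since $\pi_k$ is $\Q$-linear and the quotients by $X_1-X_2$ and by $Y_1-Y_2$ in \eqref{eq:qdsh} are polynomial operations in each fixed weight, $\pi_k$ commutes with forming them.

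Granting these two identities, the relations \eqref{eq:derel} (equivalently \eqref{eq:qdsh}) are checked by substitution. From the first identity one has $\pi_k\!\left(\Genz_1\!\bi{X_1}{Y_1+Y_2}\right) - \pi_k\!\left(\Genz_1\!\bi{X_2}{Y_1+Y_2}\right) = \zenz_1(X_1) - \zenz_1(X_2)$, so $\pi_k$ applied to the first right-hand side of \eqref{eq:qdsh} gives
\[
\zenz_2(X_1,X_2) + \zenz_2(X_2,X_1) + \frac{\zenz_1(X_1)-\zenz_1(X_2)}{X_1-X_2} + \zenz_2(Y_1+Y_2,Y_1) + \zenz_2(Y_1+Y_2,Y_2)\,,
\]
which equals $\pzenz(X_1,X_2) + \pzenz(Y_1,Y_2)$ by the first line of \eqref{eq:mouldfulldshdep2} (for the $X$-terms) and its second line applied with $Y_i$ in place of $X_i$ (for the $Y$-terms). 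Similarly, using the second identity on $\Genz_2\!\bi{X_1+X_2,X_2}{Y_1,Y_2-Y_1}$ and $\Genz_2\!\bi{X_1+X_2,X_1}{Y_2,Y_1-Y_2}$, applying $\pi_k$ to the second right-hand side of \eqref{eq:qdsh} gives
\[
\zenz_2(X_1+X_2,X_2) + \zenz_2(X_1+X_2,X_1) + \zenz_2(Y_2,Y_1) + \zenz_2(Y_1,Y_2) + \frac{\zenz_1(Y_1)-\zenz_1(Y_2)}{Y_1-Y_2}\,,
\]
which is again $\pzenz(X_1,X_2) + \pzenz(Y_1,Y_2)$ by \eqref{eq:mouldfulldshdep2}. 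As both images agree (with $\pi_k\bigl(\gp{X_1,X_2}{Y_1,Y_2}\bigr)$), the map $\pi_k$ kills \eqref{eq:derel} and descends to the asserted $\Q$-linear map $\de_k \to \dz_k$.

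I expect the only genuinely delicate point to be the combinatorial bookkeeping behind the closed form $\pi_k\!\left(\Genz_2\!\bi{X_1,X_2}{Y_1,Y_2}\right) = \zenz_2(X_1,X_2) + \zenz_2(Y_1+Y_2,Y_1)$ — collapsing the sum indexed by $(d_1,d_2,a,b)$ cleanly into $\zenz_2(Y_1+Y_2,Y_1)$ — together with the realization that \emph{both} presentations of $\pzenz$ in \eqref{eq:mouldfulldshdep2} are needed: it is the second one, $\pzenz(Y_1,Y_2) = \zenz_2(Y_1+Y_2,Y_1) + \zenz_2(Y_1+Y_2,Y_2)$, that makes the extra $Y$-only terms on each side recombine. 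In this sense the well-definedness of $\pi_k$ is, in effect, a shadow of the full double-shuffle structure of $\dz_k$.
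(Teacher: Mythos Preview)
Your proof is correct and follows essentially the same route as the paper: both reduce well-definedness to the two generating-series identities $\pi(\Genz_1) = \zenz_1(X)+\zenz_1(Y)$ and $\pi(\Genz_2) = \zenz_2(X_1,X_2)+\zenz_2(Y_1+Y_2,Y_1)$, and then feed \eqref{eq:qdsh} through them to land in \eqref{eq:mouldfulldshdep2}. The paper's proof is terser---it simply states these two identities and asserts the conclusion---whereas you spell out the binomial collapse yielding $\zenz_2(Y_1+Y_2,Y_1)$, record the induced value on $\penz$, and explicitly check that each line of \eqref{eq:qdsh} maps to $\pzenz(X_1,X_2)+\pzenz(Y_1,Y_2)$; this is a welcome elaboration rather than a different argument.
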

\begin{proof} We need to show that $\pi_k$ maps every relation defining $\de_k$ to a relation defining $\dz_k$. This is straightforward by writing the map $\pi_k$ in terms of generating series. More precisely, denote by $\pi$ the map defined on a power series by applying $\pi_k$ on the coefficients of the degree $k-2$ part. Then
\begin{align*}
    \pi\left( \Genz\bi{X}{Y} \right) &= \zenz(X) + \zenz(Y)\,,\\
    \pi\left( \Genz\bi{X_1,X_2}{Y_1,Y_2} \right) &= \zenz(X_1,X_2) + \zenz(Y_1+Y_2,Y_1) \,.
\end{align*}
Applying $\pi$ to the relations \eqref{eq:derel}, we obtain a relation for $\dz_k$ by using \eqref{eq:dzrel}.
\end{proof}

In the other direction, we have the following proposition.

\begin{proposition} \label{prop:dktoek}
	For all  $k\geq 1$, there is a well-defined $\Q$-linear map $\sigma_k: \dz_k \rightarrow \de_k$ given by
	\begin{align*}
	\sigma_k:	Z_k &\longmapsto G\bi{k}{0} - \delta_{k,2} G\bi{2}{0} \,,\\ 
	\sigma_k:  Z_{k_1,k_2} &\longmapsto G\bi{k_1,k_2}{0,0} + \frac{1}{2} \left(  \delta_{k_2,1}G\bi{k_1}{1} -  \delta_{k_1,1} G\bi{k_2}{1} +   \delta_{k_1,2} G\bi{k_2+1}{1} \right)  \,,\\
	\sigma_k: 	P_{k_1,k_2} &\longmapsto 	P\bi{k_1,k_2}{0,0}+ \frac{1}{2} \left(    \delta_{k_1,2} G\bi{k_2+1}{1} + \delta_{k_2,2} G\bi{k_1+1}{1} \right) - \delta_{k_1 \cdot k_2,1} G\bi{2}{0} \,.
	\end{align*}
\end{proposition}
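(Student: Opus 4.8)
The strategy mirrors the proof of Proposition \ref{prop:mappi}: encode $\sigma_k$ through generating series and check that it carries each defining relation \eqref{eq:dzrel} of $\dz_k$ to the defining relations \eqref{eq:derel} of $\de_k$, in their generating-series forms \eqref{eq:mouldfulldshdep2} and \eqref{eq:qdsh}. Concretely, I would first introduce the map $\sigma$ on power series by applying $\sigma_k$ to the degree-$(k-2)$ coefficients, and translate the formulas for $\sigma_k$ into identities of the form
\begin{align*}
\sigma\big(\zenz_1(X)\big) &= \Genz_1\!\bi{X}{0} - (\text{correction in weight }2)\,,\\
\sigma\big(\zenz_2(X_1,X_2)\big) &= \Genz_2\!\bi{X_1,X_2}{0,0} + \tfrac12\big(\text{terms with }\Genz_1\text{ and a single }Y\big)\,,\\
\sigma\big(\pzenz(X_1,X_2)\big) &= \gp{X_1,X_2}{0,0} + \tfrac12\big(\dots\big)\,.
\end{align*}
The point is that $\sigma_k$ has support only on the locus $d_1=d_2=0$ plus a handful of ``boundary'' corrections supported where some $k_i\in\{1,2\}$, so the generating-series description involves only the $Y$-independent specialization of $\Genz_1,\Genz_2,\gp$ together with first-order terms linear in a single $Y$-variable. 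Extracting these from the definitions of $\Genz_1,\Genz_2,\gp$ is a routine power-series manipulation, and the Kronecker-type factors $\frac{d_1!(a-1)!}{(a-1-d_2)!}$ and $\frac{d_1!d_2!}{(d_1+d_2+1)!}$ appearing in \eqref{eq:derel} should match the $\tfrac12$-coefficients after this specialization.

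Next I would verify well-definedness. Applying $\sigma$ to the first equality in \eqref{eq:dzrel} (the stuffle relation $P_{k_1,k_2}=Z_{k_1,k_2}+Z_{k_2,k_1}+Z_{k_1+k_2}$) and comparing with the first line of \eqref{eq:qdsh}: the leading terms $G\bi{k_1,k_2}{0,0}+G\bi{k_2,k_1}{0,0}$ and the pole term $\frac{\Genz_1\bi{X_1}{0}-\Genz_1\bi{X_2}{0}}{X_1-X_2}$ come out automatically, and one checks that the various $\delta_{k_i,1}$, $\delta_{k_i,2}$ corrections in $\sigma_k(Z_{k_1,k_2})$, $\sigma_k(Z_{k_1+k_2})$ assemble to precisely the corrections in $\sigma_k(P_{k_1,k_2})$ — here the $-\delta_{k_1\cdot k_2,1}G\bi{2}{0}$ term and the $-\delta_{k,2}G\bi{2}{0}$ term in $\sigma_k(Z_k)$ account for the low-weight discrepancy $\dim_\Q\dz_2=1$ versus $\dim_\Q\de_2=2$ already flagged after Definition \ref{def:fdes}. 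Then I would apply $\sigma$ to the second equality in \eqref{eq:dzrel} and compare with the second line of \eqref{eq:qdsh}: the leading terms $G\bi{k_1+k_2,k_2}{0,0}+G\bi{k_1+k_2,k_1}{0,0}$ appear, but now $\sigma$ of the left-hand side $P_{k_1,k_2}$ produces $Y$-linear corrections that must be reproduced by the $Y$-derivative (pole-in-$Y$) term $\frac{\Genz_1\bi{X_1+X_2}{Y_1}-\Genz_1\bi{X_1+X_2}{Y_2}}{Y_1-Y_2}$ in \eqref{eq:qdsh} together with the $Y_2-Y_1$ shifts in the arguments of $\Genz_2$. This is the computation I expect to be the main obstacle: the shift $\bi{X_1,X_2}{Y_1,Y_2}\mapsto\bi{X_1+X_2,X_2}{Y_1,Y_2-Y_1}$ mixes the ``$X$-part'' and the small ``$Y$-part'' of $\sigma$, so one has to expand to first order in $Y$ and track the binomial coefficients carefully; getting the combinatorial factor $\frac{d_1!(a-1)!}{(a-1-d_2)!}$ to emerge correctly from the first-order $Y$-expansion of $\Genz_2\bi{X_1+X_2,X_2}{Y_1,Y_2-Y_1}$ is the delicate point.

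Finally, since \eqref{eq:dzrel} is generated (as relations of $\dz_k$) by these two families — indexed by $k_1,k_2\geq1$ with $k_1+k_2=k$ — verifying the two displayed generating-series identities above establishes that $\sigma$ descends to a well-defined $\Q$-linear map $\sigma_k:\dz_k\to\de_k$, completing the proof. As a consistency check one can also compute $\pi_k\circ\sigma_k$ directly on generators and confirm it equals the identity on $\dz_k$ (using Proposition \ref{prop:mappi}), which pins down the normalization of all the $\tfrac12$-corrections and the low-weight $\delta$-terms.
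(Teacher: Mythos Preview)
Your plan is correct in spirit --- verifying that $\sigma_k$ sends the relations \eqref{eq:dzrel} to relations in $\de_k$ --- and would work, but it is over-engineered compared with the paper's argument, and you anticipate a difficulty that does not actually arise.

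The paper dispenses with generating series here and works directly on coefficients: apply $\sigma_k$ to both lines of \eqref{eq:dzrel}, rearrange the $\delta$-corrections (they cancel or combine exactly as you describe for the stuffle line), use the single identity $G\bi{2}{0}=G\bi{1}{1}$ in $\de_2$, and then observe that the result is \emph{precisely} \eqref{eq:derel} specialized to $d_1=d_2=0$. That last observation is the simplification you are missing. Since every image $\sigma_k(Z_{k_1,k_2})$, $\sigma_k(P_{k_1,k_2})$ lies in the span of $G\bi{\cdot,\cdot}{0,0}$, $P\bi{\cdot,\cdot}{0,0}$, $G\bi{\cdot}{0}$, $G\bi{\cdot}{1}$, and since \eqref{eq:derel} at $d_1=d_2=0$ already contains the extra term $\binom{k_1+k_2-2}{k_1-1}G\bi{k_1+k_2-1}{1}$, no ``first-order $Y$-expansion'' is needed: all the $G\bi{k-1}{1}$ corrections on the shuffle side assemble directly into this single coefficient. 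In particular the factor $\frac{d_1!d_2!}{(d_1+d_2+1)!}$ is just $1$ here, and the factor $\frac{d_1!(a-1)!}{(a-1-d_2)!}$ you mention belongs to the formula for $\pi_k$ in Proposition~\ref{prop:mappi}, not to \eqref{eq:derel}.

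Your generating-series route would reach the same conclusion, but the ``main obstacle'' you flag --- the mixing of $X$- and $Y$-parts under the shift to $\bi{X_1+X_2,X_2}{Y_1,Y_2-Y_1}$ --- collapses: at $Y_1=Y_2=0$ the shifted arguments are $\bi{X_1+X_2,X_2}{0,0}$, and the only surviving $Y$-contribution is the pole term in \eqref{eq:qdsh}, which delivers exactly the $G\bi{k-1}{1}$ coefficient. So your approach and the paper's are the same computation; the paper's version is shorter because it names the target --- \eqref{eq:derel} at $d_1=d_2=0$ --- at the outset rather than discovering it through a series expansion.
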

\begin{proof}
As before, the content of this proposition is that $\sigma_k$ is well-defined. This can be done by applying this map to the defining equation \eqref{eq:dzrel} of $\dz_k$ and then observe that, after rearranging some terms and using $G\bi{2}{0}=G\bi{1}{1}$, the result is exactly  \eqref{eq:derel} with $d_1=d_2=0$.
\end{proof}
By Proposition \ref{prop:dktoek}, every relation in the formal double zeta space gives rise to a relation in the formal double Eisenstein space, and every realization $\rho$ of $\de_k$ gives a realization $\rho \circ \sigma_k$ of $\dz_k$. For $k\geq 3$, the map $\pi_k \circ \sigma_k$ is the identity on $\dz_k$ and we have the split exact sequence
\begin{center}
    \begin{tikzcd}
    0\arrow{r} & \ker(\pi_k)  \arrow{r} & \de_k \arrow{r}{\pi_k}& \dz_k \arrow{r} \arrow[bend left=33]{l}{\sigma_k}  & 0
\end{tikzcd}\,.
\end{center}

\begin{proposition}\label{prop:partialk} For all  $k\geq 1$, the following gives a $\Q$-linear map $\partial_k: \de_k \rightarrow \de_{k+2}$
\begin{align*}
    \partial_k: G\bi{k}{d} &\longmapsto k G\bi{k+1}{d+1}\,,\\
    \partial_k: G\bi{k_1,k_2}{d_1,d_2} &\longmapsto k_1 G\bi{k_1+1,k_2}{d_1+1,d_2} + k_2 G\bi{k_1,k_2+1}{d_1,d_2+1}\,.  
\end{align*}
\end{proposition}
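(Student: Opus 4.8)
The plan is to verify well-definedness of $\partial_k$ exactly as in the proofs of Propositions \ref{prop:mappi} and \ref{prop:dktoek}: I must check that $\partial_k$ sends every defining relation \eqref{eq:derel} of $\de_k$ to a relation that holds in $\de_{k+2}$, i.e.\ to a $\Q$-linear combination of the relations \eqref{eq:derel} (in weight $k+2$). The cleanest way to organize this is via generating series. First I would observe that, on generating functions, the operator $\partial_k$ is induced by a differential operator: summing $k G\bi{k+1}{d+1} X_1^k Y_1^{d+1}/(d+1)!$ one finds that $\partial$ corresponds to $X_1^2\,\partial_{Y_1}$ acting on $\Genz_1\!\bi{X}{Y}$ (up to the shift in how the degree-$(k-1)$ coefficient sits inside $X_1^{k-1}$; one must be slightly careful about whether it is $X_1^2\partial_{Y_1}$ or $X_1\partial_{X_1}\!\cdot\!X_1$ composed with $\partial_{Y_1}$, but in either case it is a fixed first-order operator). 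Likewise on $\Genz_2$ the operator $\partial$ acts as $D := X_1^2\,\partial_{Y_1} + X_2^2\,\partial_{Y_2}$, and the same operator $D$ sends $\gp{X_1,X_2}{Y_1,Y_2}$ to the generating series of $\partial P\bi{k_1,k_2}{d_1,d_2}$ (with the analogous one-variable convention on $\gp{}{}$ and $\Genz_1$).

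Next I would apply $D$ to both halves of the stuffle/shuffle identity \eqref{eq:qdsh}. The key point is that $D$ is a derivation and that, under the substitutions of variables appearing in \eqref{eq:qdsh} (namely $(X_1,X_2,Y_1,Y_2)\mapsto(X_2,X_1,Y_2,Y_1)$ for the swap term, and $(X_1,X_2,Y_1,Y_2)\mapsto(X_1+X_2,X_2,Y_1,Y_2-Y_1)$ etc.\ for the shuffle side), the operator $D$ is compatible with these substitutions in a controlled way: for the swap it is literally equivariant, while for the linear changes of variables one computes the pullback of $D$ and checks it is again expressible through $D$ together with the chain rule. Concretely, one checks that $X_1^2\partial_{Y_1} + X_2^2\partial_{Y_2}$, pulled back along $(X_1,X_2)\mapsto(X_1+X_2,X_2)$ and $(Y_1,Y_2)\mapsto(Y_1,Y_2-Y_1)$, again acts as a combination of $\partial_{Y_1},\partial_{Y_2}$ with the correct $X$-coefficients; and similarly for the one-variable difference quotients $\tfrac{\Genz_1\!\bi{X_1}{Y_1+Y_2}-\Genz_1\!\bi{X_2}{Y_1+Y_2}}{X_1-X_2}$ and $\tfrac{\Genz_1\!\bi{X_1+X_2}{Y_1}-\Genz_1\!\bi{X_1+X_2}{Y_2}}{Y_1-Y_2}$. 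Since $\partial$ on $\Genz_1$ is $X^2\partial_Y$, and $X^2\partial_Y$ commutes with dividing by $X_1-X_2$ (no $X$-differentiation is involved) and interacts with $\tfrac{f(Y_1)-f(Y_2)}{Y_1-Y_2}$ in the standard way, both difference-quotient terms transform correctly. Hence $D$ applied to \eqref{eq:qdsh} yields again an instance of \eqref{eq:qdsh} in weight $k+2$ (with $\gp{}{}$ replaced by $D\,\gp{}{}$), which is exactly the statement that $\partial_k$ preserves the relations.

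The main obstacle I anticipate is purely bookkeeping: the one-variable generating series $\Genz_1\!\bi{X}{Y}$ carries the exponent $X^{k-1}$ rather than $X^{k}$, so writing "$\partial$ acts as $X^2\partial_Y$" requires fixing the convention carefully, and the same applies to the last, genuinely new term $\tfrac{d_1!d_2!}{(d_1+d_2+1)!}\binom{k_1+k_2-2}{k_1-1}G\bi{k_1+k_2-1}{d_1+d_2+1}$ in \eqref{eq:derel}, which has no analogue in $\dz_k$ and must be checked to map under $\partial_k$ into the span of the corresponding weight-$(k+2)$ relations. In generating-series language this term is (a multiple of) the difference quotient $\tfrac{\Genz_1\!\bi{X_1+X_2}{Y_1}-\Genz_1\!\bi{X_1+X_2}{Y_2}}{Y_1-Y_2}$ appearing in the second line of \eqref{eq:qdsh}, so once the generating-series reformulation is in place this term is handled on the same footing as everything else. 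Alternatively — and this is the fallback if the generating-series substitutions get unwieldy — one can verify the claim directly on the coefficient level: apply $\partial_k$ termwise to \eqref{eq:derel}, and recognize the result as $k_1$ times the weight-$(k+2)$ relation indexed by $(k_1{+}1,k_2;d_1{+}1,d_2)$ plus $k_2$ times the one indexed by $(k_1,k_2{+}1;d_1,d_2{+}1)$, using the Pascal-type identities $k_1\binom{l_1-1}{k_1-1} = (l_1{-}1)\binom{l_1-2}{k_1-2}$ wait more precisely $\binom{l_1-1}{k_1} \cdot (\text{stuff})$ — the relevant binomial identities are elementary and of the same shape as those already used implicitly in \eqref{eq:derel}. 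Either route is routine once set up; I would present the generating-series version for brevity.
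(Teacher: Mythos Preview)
Your overall strategy matches the paper's: describe $\partial$ as a differential operator on the generating series $\Genz_1,\Genz_2,\penz$ and verify that this operator is compatible with the linear substitutions appearing in \eqref{eq:qdsh}. The gap is that you have identified the wrong operator. From
\[
\Genz_1\!\bi{X}{Y}=\sum_{k\geq 1,\,d\geq 0} G\bi{k}{d} X^{k-1}\frac{Y^d}{d!}
\]
one computes that the coefficient of $X^{k-1}Y^d/d!$ in $\dfrac{\partial^2}{\partial X\,\partial Y}\,\Genz_1$ is exactly $k\,G\bi{k+1}{d+1}$, so $\partial$ acts on $\Genz_1$ as $\partial_X\partial_Y$ and on $\Genz_2$ as $D:=\partial_{X_1}\partial_{Y_1}+\partial_{X_2}\partial_{Y_2}$, \emph{not} as $X_1^2\partial_{Y_1}+X_2^2\partial_{Y_2}$ (neither of your two candidate first-order operators reproduces the factor $k$ together with the shift $k\mapsto k+1$).

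This matters because the compatibility step you propose would actually fail for your $D$. Under the substitution $(X_1,X_2,Y_1,Y_2)\mapsto (X_1+X_2,X_2,Y_1,Y_2-Y_1)$ one finds, by the chain rule,
\[
X_1^2\partial_{Y_1}+X_2^2\partial_{Y_2}\ \longmapsto\ X_1^2(\partial_{\tilde Y_1}-\partial_{\tilde Y_2})+X_2^2\partial_{\tilde Y_2},
\]
which is \emph{not} $(X_1+X_2)^2\partial_{\tilde Y_1}+X_2^2\partial_{\tilde Y_2}$; so the shuffle side of \eqref{eq:qdsh} would not transform back into itself. By contrast, the correct operator $\partial_{X_1}\partial_{Y_1}+\partial_{X_2}\partial_{Y_2}$ does satisfy
\[
\partial_{X_1}\partial_{Y_1}+\partial_{X_2}\partial_{Y_2}
=\partial_{X_1+X_2}\partial_{Y_2}+\partial_{X_1}\partial_{Y_1-Y_2},
\]
which is the single identity the paper checks; this makes applying $D$ to \eqref{eq:qdsh} literally return \eqref{eq:qdsh}, and the depth-one difference quotients are handled automatically since $\partial_X\partial_Y$ acts on $\Genz_1$ in each variable separately. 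Once you replace your $D$ by this one, your outline goes through verbatim and coincides with the paper's proof; the coefficient-level fallback is then unnecessary.
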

\begin{proof}
This can be checked on the level of coefficients by applying $\partial_k$ to \eqref{eq:derel}. Alternatively, one may again use generating series as follows. Denote by $\partial$ the map on generating series which is given on the coefficients of the degree $k-2$ part by $\partial_k$. Then,
\begin{align*}
    \partial \Genz\bi{X}{Y} = \frac{d}{dX}\frac{d}{dY} \Genz\bi{X}{Y}\,,\qquad  \partial \Genz\bi{X_1,X_2}{Y_1,Y_2} = \left(\frac{d}{dX_1}\frac{d}{dY_1} + \frac{d}{dX_2}\frac{d}{dY_2}   \right) \Genz\bi{X_1,X_2}{Y_1,Y_2}\,.
\end{align*}
The statement now follows from verifying that
\begin{align*}
\left(\frac{d}{dX_1}\frac{d}{dY_1} + \frac{d}{dX_2}\frac{d}{dY_2}   \right) = \left(\frac{d}{d(X_1+X_2)}\frac{d}{dY_2} + \frac{d}{dX_1}\frac{d}{d(Y_1-Y_2)}   \right) \, .
\end{align*}
Therefore, applying $\partial$ to \eqref{eq:qdsh} is equivalent to applying the above differential operator. One concludes from this that $\partial_k$ maps every relation for $\de_k$ to a relation for $\de_{k+2}$.
\end{proof}
We will see in Proposition \ref{prop:derivcommute} that, for the Kronecker realization, the operator $\partial_k$ corresponds to the $q$-derivative differential operator $q\frac{d}{dq}$, which appears in the theory of quasimodular forms.
\section{Constructing realizations}\label{sec:realization}

Finding a realization of $\de_k$ in some $\mathbb Q$-algebra $A$ for all $k\geq 1$ is equivalent to finding power series with coefficients in $A$ satisfying \eqref{eq:qdsh}. It will be convenient to describe the latter using an action of the group ring $\Z[\GL_2(\Z)]$ on formal power series $A[[X_1,X_2,Y_1,Y_2]]$, which is defined as follows. Given $\gamma = \abcd \in \GL_2(\Z)$ and $R\in A[[X_1,X_2,Y_1,Y_2]]$, define 
\begin{align}\label{eq:defmatthesaction}
R\!\mid\!\gamma\bi{X_1,X_2}{Y_1,Y_2} := R\bi{a X_1 + b X_2, c X_1 + d X_2}{\det(\gamma)(d Y_1 - c Y_2), \det(\gamma) (-bY_1 + a Y_2)}\,
\end{align}
and extend this action $\mathbb Z$-linearly to $\Z[\GL_2(\Z)]$. In the following, we will use the matrices
\begin{align}\begin{split}\label{eq:matrixdef}
\sigma&= \pmatr{-1 & 0\\0 & -1}\,,\qquad
\epsilon = \pmatr{0 & 1 \\ 1 & 0}\,,\qquad \delta=\pmatr{-1 & 0\\0 & 1} \,,    \\
T &= \pmatr{1 & 1\\ 0 & 1}\,,\qquad S=\pmatr{0 & -1\\1 & 0}\,,\qquad U = \pmatr{1 & -1\\1 & 0}\,.
\end{split}
\end{align}
Equation \eqref{eq:qdsh} can then be written as
\begin{align*}
		\penz &=  \Genz_2 {\mid\! (1+\epsilon)} + \renz_{\Genz_1}^\ast =    \Genz_2 {\mid T(1+\epsilon)} + \renz_{\Genz_1}^\shuffle \,,
\end{align*}
\newcommand{\ps}{\mathfrak{T}}
where we define, for a power series $\ps\bi{X}{Y}\in A[[X,Y]]$, the following elements in $A[[X_1,X_2,Y_1,Y_2]]$
\begin{align}\label{eq:defR}
	 \renz_{\ps}^\ast\bi{X_1,X_2}{Y_1,Y_2} := \frac{\ps\!\bi{X_1}{Y_1+Y_2} -\ps\!\bi{X_2}{Y_1+Y_2}}{X_1-X_2}\,,\quad  \renz_{\ps}^\shuffle\bi{X_1,X_2}{Y_1,Y_2} :=\frac{\ps\!\bi{X_1+X_2}{Y_1}-\ps\!\bi{X_1+X_2}{Y_2}}{Y_1-Y_2}\,.
\end{align}
In order to construct realizations of $\de_k$, we need to find power series $\penz_\Benz,\Benz_1,\Benz_2 \in A[[X_1,X_2,Y_1,Y_2]]$ which satisfy 
\begin{align}\label{eq:actiongenrelation}
		\penz_\Benz &=  \Benz_2 {\mid\! (1+\epsilon)} + \renz_{\Benz_1}^\ast =    \Benz_2  {\mid T(1+\epsilon)} + \renz_{\Benz_1}^\shuffle \,,
\end{align}
This can be done similarly as in \cite{GKZ} for the formal double zeta space. More precisely, we consider all weights $k$ at the same time and consider the following space of Laurent series
% \begin{equation*}
% \mathcal{W}^+ := \ker(1+U+U^2) \cap \ker(1+S) \cap \ker(1-\epsilon)\subset A((X_1,X_2,Y_1,Y_2))\,.\footnote{We remark that the action of $\operatorname{SL}_2(\mathbb Z)$ on $A[[X_1,X_2,Y_1,Y_2]]$ does not extend to formal Laurent series. Nevertheless, the equations defining the space $\mathcal{W}^+$ can be viewed as identities in the field of fractions $\operatorname{Frac}(A((X_1,X_2,Y_1,Y_2)))$, and hence $\mathcal{W}^+$ is well-defined.}
% \end{equation*}
\begin{equation*}
\mathcal{W}^+\!(A) := \left\{ R \in A((X_1,X_2,Y_1,Y_2)) \,\,\big\vert\,\, R\!\mid\!(1+U+U^2)=R\!\mid\!(1+S)=R\!\mid\!(1-\epsilon)=0   \right\},\footnote{We remark that the action of $\operatorname{SL}_2(\mathbb Z)$ on $A[[X_1,X_2,Y_1,Y_2]]$ does not extend to formal Laurent series. Nevertheless, the equations defining the space $\mathcal{W}^+\!(A)$ can be viewed as identities in the field of fractions $\operatorname{Frac}(A((X_1,X_2,Y_1,Y_2)))$, and hence $\mathcal{W}^+\!(A)$ is well-defined.}
\end{equation*}
where $A((X_1,X_2,Y_1,Y_2))$ denotes the $A$-algebra of all formal Laurent series of the form
\[
R\bi{X_1,X_2}{Y_1,Y_2}=\sum_{\substack{m_1,m_2>\!\!\!>-\infty \\ n_1,n_2>\!\!\!>-\infty}}a\bi{m_1,m_2}{n_1,n_2}X_1^{m_1}X_2^{m_2}Y_1^{n_1}Y_2^{n_2}, \qquad a\bi{m_1,m_2}{n_1,n_2} \in A \,.
\]
The space $\mathcal{W}^+\!(A)$ may be seen as the collection of all odd/symmetric `bi'-period polynomials. Given one of its elements, we next construct a realization of $\de_k$, and then discuss how elements in $\mathcal{W}^+\!(A)$ may be constructed from certain Laurent series satisfying the \emph{Fay-identity}. We begin with the following analogue of \cite[Proposition 5]{GKZ}, which associates to an element of $\mathcal{W}^+\!(A)$ a solution of \eqref{eq:actiongenrelation} with $\Benz_1=0$.
\begin{lemma} \label{lem:prop5analogue}
	For $\widetilde{\penz} \in \mathcal{W}^+\!(A)$ the element $\widetilde{\Benz}_2 := \frac{1}{3} \widetilde{\penz} |(1+T^{-1})$ satisfies
	\begin{align*}
	\widetilde{\penz}  &= \widetilde{\Benz}_2|(1+\epsilon)  =  \widetilde{\Benz}_2 \mid T(1+\epsilon) \,.
	\end{align*}
\end{lemma}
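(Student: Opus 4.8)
The strategy is to exploit the defining relations of $\mathcal{W}^+\!(A)$ to rewrite $\widetilde{\penz}$ in two different ways, matching the two expressions for $\widetilde{\Benz}_2$ applied to $(1+\epsilon)$ and to $T(1+\epsilon)$. First I would observe that the three generators $1+U+U^2$, $1+S$, $1-\epsilon$ of the ideal annihilating $\mathcal{W}^+\!(A)$ generate, together with $\sigma$ (which acts trivially since $\widetilde{\penz}$ has even total degree in each relevant weight component), a rich enough subalgebra of $\Z[\GL_2(\Z)]$ to produce the desired identities. The key algebraic fact to verify is that, modulo the left ideal generated by $1+U+U^2$, $1+S$, and $1-\epsilon$, one has the congruences
\begin{align*}
3(1+T^{-1})(1+\epsilon) &\equiv 3 \cdot 1 \,,\\
3(1+T^{-1})T(1+\epsilon) &\equiv 3\cdot 1\,,
\end{align*}
so that $\widetilde{\Benz}_2|(1+\epsilon) = \widetilde{\penz}$ and $\widetilde{\Benz}_2|T(1+\epsilon) = \widetilde{\penz}$. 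Concretely, $(1+T^{-1})(1+\epsilon) = 1 + \epsilon + T^{-1} + T^{-1}\epsilon$; using $\epsilon \equiv 1$ this is $2 + T^{-1} + T^{-1}\epsilon$, and the task is to show $T^{-1} + T^{-1}\epsilon \equiv 1$ modulo the relations. Since $U = T^{-1}S$ (one checks $T^{-1}S = \left(\begin{smallmatrix}1 & -1 \\ 0 & 1\end{smallmatrix}\right)\left(\begin{smallmatrix}0 & -1 \\ 1 & 0\end{smallmatrix}\right) = \left(\begin{smallmatrix}-1 & -1 \\ 1 & 0\end{smallmatrix}\right)$... I would instead recompute the correct relation among $T$, $S$, $U$, $\epsilon$ in $\GL_2(\Z)$ directly from \eqref{eq:matrixdef}), the hexagon/pentagon-type identities relating these matrices let one collapse the four-term expression.

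The cleanest route is probably the following sequence of steps. Step 1: record the multiplicative relations $S^2 = \sigma$, $U^3 = \sigma$ (equivalently $1 + U + U^2$ kills $\widetilde{\penz}$ together with $U^3 \equiv 1$), $\epsilon S \epsilon = S^{-1} = S\sigma$, $\epsilon T \epsilon = T^{-1}$... actually $\epsilon\left(\begin{smallmatrix}1&1\\0&1\end{smallmatrix}\right)\epsilon = \left(\begin{smallmatrix}1&0\\1&1\end{smallmatrix}\right)$, so let me denote this lower-triangular matrix and work with it, and $U = T\epsilon$ or $U = \epsilon T^{-1}\epsilon \cdots$; the precise identity to use is $U = S T^{-1}$ or similar, to be pinned down from the explicit matrices. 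Step 2: from $\widetilde{\penz}|(1-\epsilon) = 0$ deduce $\widetilde{\penz}|\epsilon = \widetilde{\penz}$, and from $\widetilde{\penz}|(1+S) = 0$ deduce $\widetilde{\penz}|S = -\widetilde{\penz}$. Step 3: from $\widetilde{\penz}|(1+U+U^2) = 0$ get $\widetilde{\penz}|(U + U^2) = -\widetilde{\penz}$, hence $\widetilde{\penz}|U = $ (something expressible). Step 4: substitute $U^{\pm 1}$ in terms of $S, T, \epsilon$ and combine Steps 2–3 to evaluate $\widetilde{\penz}|(1+T^{-1})(1+\epsilon)$ and $\widetilde{\penz}|(1+T^{-1})T(1+\epsilon)$, each of which should telescope to $3\widetilde{\penz}$. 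This is exactly the bookkeeping that appears in \cite[Proposition 5]{GKZ} for the one-variable period polynomial action, lifted here to the four-variable action \eqref{eq:defmatthesaction}; since \eqref{eq:defmatthesaction} is a genuine group action, all the matrix identities transfer verbatim.

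The main obstacle is purely organizational rather than conceptual: one must get the matrix identities among $S$, $T$, $U$, $\epsilon$, $\sigma$ exactly right under the \emph{modified} action \eqref{eq:defmatthesaction} (which conjugates the $X$-action by $S$ and twists the $Y$-action by $\det$), and check that the parity/symmetry hypotheses encoded in $\mathcal{W}^+\!(A)$ are precisely what is needed for the four-term sums to collapse. In particular one should confirm that $\sigma$ acts as the identity on the relevant graded pieces — this follows because the total degree in $(X_1,X_2,Y_1,Y_2)$ of the weight-$k$ component is $k-2$... more carefully, $\sigma$ sends $X_i \mapsto -X_i$ and $Y_i \mapsto -Y_i$ (using $\det(\sigma)=1$), so it multiplies a monomial of total degree $N$ by $(-1)^N$, and one needs $N$ even on the support of $\widetilde{\penz}$; this evenness is itself a consequence of the three defining relations (e.g. $S^2 = \sigma$ forces $\widetilde{\penz}|\sigma = \widetilde{\penz}|S^2 = (-1)^2\widetilde{\penz} = \widetilde{\penz}$ using Step 2's analogue for $S$). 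Once these ingredients are in place, the computation is a short finite manipulation in $\Z[\GL_2(\Z)]$ and I would present it as such, deferring the routine verification of the individual matrix products to the reader or a displayed table.
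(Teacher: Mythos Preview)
Your plan is correct and is essentially the paper's own argument: both reduce to showing $(1+T^{-1})(1+\epsilon)\equiv 3$ and $(1+T^{-1})T(1+\epsilon)\equiv 3$ modulo the left ideal generated by $1+U+U^2$, $1+S$, $1-\epsilon$ (and the induced relation $\widetilde{\penz}\mid\sigma=\widetilde{\penz}$), exactly as in \cite[Proposition~5]{GKZ}. The paper organizes the bookkeeping slightly differently, introducing $T'=U^2S^{-1}$ and the auxiliary relation $\widetilde{\penz}\mid\delta=-\widetilde{\penz}$ (from $S=\delta\epsilon$) to collapse the four-term sums via $\widetilde{\penz}\mid(1-T-T')=0$ and $\sigma T'T^{-1}=\delta T^{-1}\epsilon$, but this is the same computation you outline.
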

\begin{proof} The proof is similar to the one of \cite[Proposition 5 (ii)]{GKZ}. First, by \eqref{eq:matrixdef}, we have $T=US^{-1}$, and by setting $T':=U^2 S^{-1}$ we get 
\begin{align*}
    1-T-T'=(1+S)S^{-1} - (1+U+U^2)S^{-1}
\end{align*}
and therefore $\widetilde{\penz}\mid (1-T-T')=0$. Further, since $\widetilde{\penz}\mid S = -\widetilde{\penz}$, $\widetilde{\penz}\mid \epsilon = \widetilde{\penz}$ and $S=\delta \epsilon$, $\sigma=\epsilon \delta \epsilon \delta$, we get $\widetilde{\penz}\mid \delta = -\widetilde{\penz}$ as well as $\widetilde{\penz}\mid \sigma = \widetilde{\penz}$. This together with $\sigma T' T^{-1} = \delta T^{-1}\epsilon$ and $T' = \epsilon T \epsilon$ gives
\begin{align*}
    \widetilde{\penz}\mid\! (1+T^{-1})(1+\epsilon) &= 2   \widetilde{\penz} +   \widetilde{\penz}\mid(T^{-1}-\delta T^{-1}\epsilon) = 3  \widetilde{\penz} +   \widetilde{\penz}\mid(1-T-T')T^{-1} = 3  \widetilde{\penz}\,,\\
    \widetilde{\penz}\mid\! (1+T^{-1})T(1+\epsilon) &=  \widetilde{\penz}\mid\! (1+T)(1+\epsilon) = 2 \widetilde{\penz}   + \widetilde{\penz}\mid (T+T') = 3  \widetilde{\penz}\,.
\end{align*}
%This can be proven by using the same argument as in \cite[Proposition 5 (ii)]{GKZ}.
\end{proof}
Notice that not all elements constructed in Lemma \ref{lem:prop5analogue} solve the original problem to find a solution of \eqref{eq:actiongenrelation}, since $\widetilde{\penz}$ might have poles. Instead, to construct solutions of \eqref{eq:actiongenrelation} which are nonzero in depth one and which are power series without poles, we consider elements $\widetilde{\penz} \in \mathcal{W}^+\!(A)$ of the following form
\begin{align}\label{eq:defppol}
\widetilde{\penz}_\Benz\bi{X_1,X_2}{Y_1,Y_2} &= \underbrace{-\frac{1}{2}\left( \left(\frac{1}{X_2} + \frac{1}{Y_2} \right) \Benz_1\bi{X_1}{Y_1} +  \left(\frac{1}{X_1} + \frac{1}{Y_1} \right) \Benz_1\bi{X_2}{Y_2}  \right)}_{\widetilde{\penz}_\Benz^{\rm{pol}}\bi{X_1,X_2}{Y_1,Y_2} := } + \penz_\Benz\bi{X_1,X_2}{Y_1,Y_2}\,,
\end{align}
for some  $\Benz_1\bi{X}{Y} \in A[[X,Y]]$ and $\penz_\Benz\bi{X_1,X_2}{Y_1,Y_2} \in A[[X_1,X_2,Y_1,Y_2]]$. \\

\begin{lemma}\label{lem:beta} Let $\Benz_1\bi{X_1}{Y_1} \in A[[X_1,Y_1]]$ be with $\Benz_1\bi{-X_1}{-Y_1}=-\Benz_1\bi{X_1}{Y_1}$ and 
	\begin{align}\label{eq:defbeta}
	\beta := \frac{1}{4} \renz_{\Benz_1}^\ast {\mid\! (5 -3 U + U \epsilon)} +  \frac{1}{4} \renz_{\Benz_1}^\shuffle {\mid\! (T^{-1} ( 5 - 3 \epsilon + U))}\,,
	\end{align}
	where $\renz_{\Benz_1}^\ast$ and $\renz_{\Benz_1}^\ast$ are given by \eqref{eq:defR}. Then we have
	\begin{align*}
	\beta {\mid\! (1+\epsilon)} &= 3  \renz_{\Benz_1}^\ast+ \widetilde{\penz}_\Benz^{\rm{pol}} {\mid\! (1 - T^{-1}- T^{-1} \epsilon)}\,,\\
	\beta{\mid T(1+\epsilon)} &= 3  \renz_{\Benz_1}^\shuffle + \widetilde{\penz}_\Benz^{\rm{pol}} {\mid\! (1 - T- T \epsilon)}\,,
	\end{align*}
	where $\widetilde{\penz}_\Benz^{\rm{pol}}$ is given by \eqref{eq:defppol}.
\end{lemma}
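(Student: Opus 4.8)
The plan is to verify both displayed identities by a direct computation in the group ring $\mathbb{Z}[\GL_2(\mathbb{Z})]$, using the relations among the matrices \eqref{eq:matrixdef} that were already established in the proof of Lemma \ref{lem:prop5analogue}, together with the elementary symmetry properties of $\renz_{\Benz_1}^\ast$ and $\renz_{\Benz_1}^\shuffle$ that follow from the oddness of $\Benz_1$. First I would record how the two auxiliary series transform: since $\Benz_1\bi{-X}{-Y}=-\Benz_1\bi{X}{Y}$, a short calculation from \eqref{eq:defR} shows that $\renz_{\Benz_1}^\ast$ is $\epsilon$-symmetric (swapping $X_1\leftrightarrow X_2$), is even under $\sigma$, and that $\renz_{\Benz_1}^\ast\mid T = -\renz_{\Benz_1}^\shuffle$-type relations hold; similarly $\renz_{\Benz_1}^\shuffle$ is antisymmetric in $Y_1,Y_2$, hence $\epsilon$-symmetric after the sign coming from $\det$, and one has the key interchange $\renz_{\Benz_1}^\ast\mid S = \pm\renz_{\Benz_1}^\shuffle$ up to easily-computed substitutions. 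The point is that $\renz_{\Benz_1}^\ast$ and $\renz_{\Benz_1}^\shuffle$ span a small subspace stable under the relevant group elements, so all the $\mid\gamma$ operations appearing reduce to $\mathbb{Z}$-linear combinations of these two series plus the polar part $\widetilde{\penz}_\Benz^{\rm pol}\mid(\text{something})$.

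Next I would expand $\beta\mid(1+\epsilon)$ directly from the definition \eqref{eq:defbeta}: this gives $\frac14\renz_{\Benz_1}^\ast\mid(5-3U+U\epsilon)(1+\epsilon) + \frac14\renz_{\Benz_1}^\shuffle\mid T^{-1}(5-3\epsilon+U)(1+\epsilon)$. Using $\epsilon^2=1$, $U\epsilon\cdot\epsilon = U$, and the cocycle relations $1+U+U^2=0$ (on the relevant space, in the sense used in $\mathcal{W}^+$) together with $U^2=US^{-1}S = \cdots$, the first sum collapses. The mechanism is exactly the one in Lemma \ref{lem:prop5analogue}: terms regroup into a multiple of $\renz_{\Benz_1}^\ast$ plus terms that, after using $T=US^{-1}$ and $S=\delta\epsilon$, land on the polar contribution $\widetilde{\penz}_\Benz^{\rm pol}\mid(1-T^{-1}-T^{-1}\epsilon)$. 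The appearance of $\widetilde{\penz}_\Benz^{\rm pol}$ here is forced: when one applies $S$ or $U$ to a series like $\bigl(\tfrac1{X_2}+\tfrac1{Y_2}\bigr)\Benz_1\bi{X_1}{Y_1}$ the denominators transform into $\tfrac{1}{cX_1+dX_2}$-type expressions, and the partial-fraction identity $\frac{1}{X_1(X_1-X_2)}+\frac{1}{X_2(X_2-X_1)}=\frac{1}{X_1 X_2}$ (and its $Y$-analogue) is what converts those into $\renz^\ast$, $\renz^\shuffle$ and back into polar terms; carefully bookkeeping these is the crux. The second identity $\beta\mid T(1+\epsilon)$ is handled identically, using $\beta\mid T$ first (which by construction swaps the roles of the $\ast$- and $\shuffle$-parts because $\renz^\ast\mid T$ and $\renz^\shuffle\mid T^{-1}\cdot T = \renz^\shuffle$ interchange), then applying $1+\epsilon$.

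The main obstacle I anticipate is purely organizational rather than conceptual: one must choose a normal form for elements of the span $\langle \renz_{\Benz_1}^\ast, \renz_{\Benz_1}^\shuffle, \widetilde{\penz}_\Benz^{\rm pol}\mid\gamma\rangle$ and track a dozen or so $\GL_2(\mathbb{Z})$-action terms without sign errors, since the $\det(\gamma)$ factors in \eqref{eq:defmatthesaction} interact with the oddness of $\Benz_1$ in a way that is easy to mishandle. Concretely, the safest route is: (i) tabulate $\renz_{\Benz_1}^\ast\mid\gamma$ and $\renz_{\Benz_1}^\shuffle\mid\gamma$ for $\gamma\in\{\epsilon, U, S, T, T^{-1}, \delta,\sigma\}$ as explicit Laurent series; (ii) tabulate $\widetilde{\penz}_\Benz^{\rm pol}\mid\gamma$ for the same $\gamma$, splitting off the part that is genuinely a power series (which will recombine into $\renz^\ast$, $\renz^\shuffle$ via the partial-fraction identities) from the genuinely polar part; (iii) substitute into $\frac14(5-3U+U\epsilon)(1+\epsilon)$ and its $T$-conjugate and collect. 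Everything then reduces to the already-verified identities $\widetilde{\penz}\mid(1-T-T')=0$, $\widetilde{\penz}\mid S=-\widetilde{\penz}$, $\widetilde{\penz}\mid\delta=-\widetilde{\penz}$ from Lemma \ref{lem:prop5analogue} applied to the polynomial $\widetilde{\penz}_\Benz^{\rm pol}$ separately, plus the bookkeeping. I would present the computation as two displayed chains of equalities, one per identity, with the partial-fraction lemma stated once as an inline remark.
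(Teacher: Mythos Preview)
Your plan rests on structural relations that are not available under the hypotheses of the lemma. The only assumption is that $\Benz_1$ is odd; there is no assumption that anything lies in $\mathcal{W}^+(A)$. In particular, $\renz_{\Benz_1}^\ast$, $\renz_{\Benz_1}^\shuffle$, and $\widetilde{\penz}_\Benz^{\rm pol}$ separately do \emph{not} satisfy $R\mid(1+U+U^2)=0$ or $R\mid(1+S)=0$, so the ``cocycle relations $1+U+U^2=0$ (on the relevant space, in the sense used in $\mathcal{W}^+$)'' that you invoke simply do not hold here, and the identities from Lemma~\ref{lem:prop5analogue} cannot be applied to $\widetilde{\penz}_\Benz^{\rm pol}$ alone. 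Likewise the claimed exchange law ``$\renz_{\Benz_1}^\ast\mid T=-\renz_{\Benz_1}^\shuffle$-type relations'' is false: a quick computation gives $\renz_{\Benz_1}^\ast\mid T=\bigl(\Benz_1\bi{X_1+X_2}{Y_2}-\Benz_1\bi{X_2}{Y_2}\bigr)/X_1$, which is neither $\pm\renz_{\Benz_1}^\shuffle$ nor a combination of $\renz_{\Benz_1}^\ast$, $\renz_{\Benz_1}^\shuffle$ and polar terms. So the two-dimensional ``stable subspace'' you posit does not exist, and the group-ring bookkeeping cannot close.

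The paper's proof is purely a direct computation: one expands both sides of each identity as explicit Laurent series in $X_1,X_2,Y_1,Y_2$ with $\Benz_1$ evaluated at various linear combinations of the variables, subtracts, and finds that the difference equals
\[
\frac{1}{4(Y_1+Y_2)}\Bigl(\Benz_1\!\bi{X_1-X_2}{Y_1}+\Benz_1\!\bi{-X_1+X_2}{-Y_1}+\Benz_1\!\bi{X_1-X_2}{-Y_2}+\Benz_1\!\bi{-X_1+X_2}{Y_2}\Bigr),
\]
which vanishes by oddness of $\Benz_1$. The oddness is used only at this final step, not to produce transformation rules along the way. Your ``safest route'' (i)--(iii), if carried out honestly, would in fact collapse to exactly this computation, but the surrounding narrative about group-ring reductions and Lemma~\ref{lem:prop5analogue} should be dropped: it is not just unnecessary but incorrect as stated.
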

\begin{proof}
	One checks by direct computation that 
	{\small	\begin{align*}
	3  \renz_{\Benz_1}^\ast+ \widetilde{\penz}_\Benz^{\rm{pol}}\mid\! (1 - T^{-1}- T^{-1} \epsilon) - \beta\mid\! (1+\epsilon)  =&\frac{1}{4(Y_1+Y_2)}\left( \Benz_1\bi{X_1-X_2}{Y_1}+\Benz_1\bi{-X_1+X_2}{-Y_1} \right)\\
	&\frac{1}{4(Y_1+Y_2)} \left(\Benz_1\bi{X_1-X_2}{-Y_2}+\Benz_1\bi{-X_1+X_2}{Y_2} \right).
	\end{align*}}Using the equality $\Benz_1\bi{-X_1}{-Y_2}=-\Benz_1\bi{X_1}{Y_2}$, we see that the right-hand side vanishes. A similar calculation yields the second equation.
\end{proof}
The main result of this section is the following theorem.
\begin{theorem} \label{thm:solfromwp}Let $\widetilde{\penz}_\Benz \in \mathcal{W}^+\!(A)$ be of the form 
\begin{align*}
\widetilde{\penz}_\Benz\bi{X_1,X_2}{Y_1,Y_2} &= -\frac{1}{2}\left( \left(\frac{1}{X_2} + \frac{1}{Y_2} \right) \Benz_1\bi{X_1}{Y_1} +  \left(\frac{1}{X_1} + \frac{1}{Y_1} \right) \Benz_1\bi{X_2}{Y_2}  \right) + \penz_\Benz\bi{X_1,X_2}{Y_1,Y_2}\,,
\end{align*}
for some  $\Benz_1\bi{X_1}{Y_1},\penz_\Benz\bi{X_1,X_2}{Y_1,Y_2} \in A[[X_1,X_2,Y_1,Y_2]]$ with $\Benz_1\bi{-X_1}{-Y_1}=-\Benz_1\bi{X_1}{Y_1}$. Then 
	\[  \Benz_2 :=\frac{1}{3} \penz_\Benz {\mid\! (1+T^{-1})}-\frac{1}{12} \renz_{\Benz_1}^\ast {\mid\! (5 -3 U + U \epsilon)} -\frac{1}{12}\renz_{\Benz_1}^\shuffle {\mid\! (T^{-1} ( 5 - 3 \epsilon + U))}\] solves \eqref{eq:actiongenrelation}, i.e.,
\begin{align*}
		\penz_\Benz &=  \Benz_2 {\mid\! (1+\epsilon)} + \renz_{\Benz_1}^\ast \\
		&=    \Benz_2  {\mid T(1+\epsilon)} + \renz_{\Benz_1}^\shuffle \,.
\end{align*}
We obtain in particular a realization of $\de_k$, for all $k\geq 1$, with values in $A$.
\end{theorem}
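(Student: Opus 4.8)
\emph{Proof plan.} The plan is to deduce the theorem from Lemmas~\ref{lem:prop5analogue} and~\ref{lem:beta} by a short manipulation in the group ring $\Z[\GL_2(\Z)]$ acting via \eqref{eq:defmatthesaction}, all intermediate identities being read in $\operatorname{Frac}(A((X_1,X_2,Y_1,Y_2)))$ as in the definition of $\mathcal{W}^+(A)$. Write $\widetilde{\penz}_\Benz=\widetilde{\penz}_\Benz^{\rm{pol}}+\penz_\Benz$ as in \eqref{eq:defppol}. Since $\widetilde{\penz}_\Benz\in\mathcal{W}^+(A)$, Lemma~\ref{lem:prop5analogue} yields $\widetilde{\Benz}_2:=\tfrac13\,\widetilde{\penz}_\Benz\mid(1+T^{-1})$ with $\widetilde{\penz}_\Benz=\widetilde{\Benz}_2\mid(1+\epsilon)=\widetilde{\Benz}_2\mid T(1+\epsilon)$. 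The first observation is that, with $\beta$ the element \eqref{eq:defbeta} of Lemma~\ref{lem:beta}, the $\Benz_2$ of the statement is exactly
\[
\Benz_2 \;=\; \widetilde{\Benz}_2 \;-\; \tfrac13\,\widetilde{\penz}_\Benz^{\rm{pol}}\mid(1+T^{-1}) \;-\; \tfrac13\,\beta\,,
\]
because $\tfrac13\,\widetilde{\penz}_\Benz\mid(1+T^{-1})=\tfrac13\,\widetilde{\penz}_\Benz^{\rm{pol}}\mid(1+T^{-1})+\tfrac13\,\penz_\Benz\mid(1+T^{-1})$ and $\tfrac13\beta$ collects precisely the two $\renz$-terms appearing in the displayed formula for $\Benz_2$. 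Conceptually, one starts from the ``naive'' solution $\widetilde{\Benz}_2$ associated to $\widetilde{\penz}_\Benz$ and subtracts a correction designed to cancel the polar part coming from $\widetilde{\penz}_\Benz^{\rm{pol}}$ while producing the $\renz$-terms with the right coefficients.

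Next I would check the two halves of \eqref{eq:actiongenrelation} directly. Applying $\mid(1+\epsilon)$ to the last display, one has $\widetilde{\Benz}_2\mid(1+\epsilon)=\widetilde{\penz}_\Benz$ by Lemma~\ref{lem:prop5analogue} and $\beta\mid(1+\epsilon)=3\,\renz_{\Benz_1}^\ast+\widetilde{\penz}_\Benz^{\rm{pol}}\mid(1-T^{-1}-T^{-1}\epsilon)$ by Lemma~\ref{lem:beta}. Since $(1+T^{-1})(1+\epsilon)+(1-T^{-1}-T^{-1}\epsilon)=2+\epsilon$ in $\Z[\GL_2(\Z)]$, and since $\widetilde{\penz}_\Benz^{\rm{pol}}\mid(2+\epsilon)=3\,\widetilde{\penz}_\Benz^{\rm{pol}}$ (because $\widetilde{\penz}_\Benz^{\rm{pol}}$ is manifestly invariant under $\epsilon$, which exchanges $(X_1,Y_1)$ and $(X_2,Y_2)$), the contributions involving $\widetilde{\penz}_\Benz^{\rm{pol}}$ telescope to $-\widetilde{\penz}_\Benz^{\rm{pol}}$, cancelling the $\widetilde{\penz}_\Benz^{\rm{pol}}$ hidden inside $\widetilde{\penz}_\Benz$ and leaving $\Benz_2\mid(1+\epsilon)=\penz_\Benz-\renz_{\Benz_1}^\ast$. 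The second identity $\Benz_2\mid T(1+\epsilon)=\penz_\Benz-\renz_{\Benz_1}^\shuffle$ comes out of the identical computation, now using the second formula of Lemma~\ref{lem:beta} together with $(1+T^{-1})T(1+\epsilon)=(1+T)(1+\epsilon)$ and $(1+T)(1+\epsilon)+(1-T-T\epsilon)=2+\epsilon$. This establishes \eqref{eq:actiongenrelation}.

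It then only remains to see that $\Benz_2$ is a genuine element of $A[[X_1,X_2,Y_1,Y_2]]$: $\penz_\Benz$ is a power series by hypothesis, the series $\renz_{\Benz_1}^\ast$ and $\renz_{\Benz_1}^\shuffle$ lie in $A[[X_1,X_2,Y_1,Y_2]]$ because the numerators in \eqref{eq:defR} vanish on $X_1=X_2$ (resp.\ $Y_1=Y_2$) and are hence divisible by $X_1-X_2$ (resp.\ $Y_1-Y_2$), and the $\GL_2(\Z)$-action \eqref{eq:defmatthesaction} preserves $A[[X_1,X_2,Y_1,Y_2]]$ since it substitutes each variable by a $\Z$-linear form in the four variables. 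Thus $(\penz_\Benz,\Benz_1,\Benz_2)$ is a power-series solution of \eqref{eq:actiongenrelation}, which is the generating-series reformulation \eqref{eq:qdsh} of the defining relations \eqref{eq:derel}; reading off the coefficients of these three power series as in the definitions of $\Genz_1$, $\Genz_2$ and $\gp{X_1,X_2}{Y_1,Y_2}$ assigns to every generator of $\de_K$ a value in $A$ compatibly with \eqref{eq:derel}, i.e.\ produces a $\Q$-linear realization $\de_K\to A$ for each $K\ge 1$. I expect the only genuinely nontrivial ingredient to be the combinatorial identity hidden in the ``direct computation'' of Lemma~\ref{lem:beta}; once that is granted, the argument above is just bookkeeping in $\Z[\GL_2(\Z)]$, with the sole delicate point being the cancellation of the polar part of $\widetilde{\Benz}_2$ against the $\beta$-correction.
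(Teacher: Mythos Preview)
Your argument is correct and is essentially the paper's own proof: both write $\Benz_2=\tfrac13\,\penz_\Benz\mid(1+T^{-1})-\tfrac13\beta$, apply Lemmas~\ref{lem:prop5analogue} and~\ref{lem:beta}, and use the group-ring identity $(1+T^{-1})(1+\epsilon)+(1-T^{-1}-T^{-1}\epsilon)=2+\epsilon$ together with $\widetilde{\penz}_\Benz^{\rm pol}\mid\epsilon=\widetilde{\penz}_\Benz^{\rm pol}$ to cancel the polar contribution. Your added remark that $\Benz_2$ lies in $A[[X_1,X_2,Y_1,Y_2]]$ is a useful clarification the paper leaves implicit.
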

\begin{proof} We only verify the first equation; the verification of the second one is similar. First, notice that we can write $\Benz_2 = \frac{1}{3} \penz_\Benz {\mid\! (1+T^{-1})} - \frac{1}{3}\beta$, where $\beta$ is given as in \eqref{eq:defbeta}. Writing $\penz_\Benz = \widetilde{\penz}_\Benz - \widetilde{\penz}_\Benz^{\rm{pol}}$, using Lemma \ref{lem:prop5analogue}, Lemma \ref{lem:beta}, and $\widetilde{\penz}_\Benz^{\rm{pol}} | \epsilon = \widetilde{\penz}_\Benz^{\rm{pol}}$, we obtain
\begin{align*}
    \Benz_2 {\mid\! (1+\epsilon)} = \,&\frac{1}{3} \left( \widetilde{\penz}_\Benz - \widetilde{\penz}_\Benz^{\rm{pol}}\right) {\mid\! (1+T^{-1}) (1+\epsilon)}  - \frac{1}{3}\beta{\mid\! (1+\epsilon)} \\
    \overset{\text{Lem.} \ref{lem:prop5analogue}}{=}\,\,  &\widetilde{\penz}_\Benz -\frac{1}{3} \widetilde{\penz}_\Benz^{\rm{pol}} {\mid\! (1+T^{-1}+\epsilon+T^{-1}\epsilon)}  - \frac{1}{3}\beta{\mid\! (1+\epsilon)}\\
    \overset{\text{Lem.} \ref{lem:beta}}{=}\,\, &\widetilde{\penz}_\Benz -\frac{1}{3} \widetilde{\penz}_\Benz^{\rm{pol}} {\mid\! (1+T^{-1}+\epsilon+T^{-1}\epsilon)} - \renz_{\Benz_1}^\ast -\frac{1}{3}\widetilde{\penz}_\Benz^{\rm{pol}} {\mid\! (1 - T^{-1}- T^{-1} \epsilon)} \\
    = \,&\widetilde{\penz}_\Benz - \widetilde{\penz}_\Benz^{\rm{pol}}  - \renz_{\Benz_1}^\ast = \penz_\Benz - \renz_{\Benz_1}^\ast\,,
\end{align*}
as was to be shown.
\end{proof}
We now single out a special case of Theorem \ref{thm:solfromwp}. We shall say that an element $\Fenz\bi{X_1}{Y_1} \in A((X_1,X_2,Y_1,Y_2))$ satisfies the \emph{Fay identity}, if 
\begin{align}\label{eq:fay}
\Fenz\bi{X_1}{Y_1}\Fenz\bi{X_2}{Y_2} + \Fenz\bi{X_1-X_2}{-Y_2}\Fenz\bi{X_1}{Y_1+Y_2}+\Fenz\bi{-X_2}{-Y_1-Y_2}\Fenz\bi{X_1-X_2}{Y_1} = 0\,.     
\end{align}
Furthermore, if $\Fenz$ is odd, $\Fenz\bi{-X}{-Y} =-\Fenz\bi{X}{Y}$, then  
\begin{align*}
    \widetilde{\penz}_\Fenz\bi{X_1,X_2}{Y_1,Y_2} := \Fenz\bi{X_1}{Y_1}\Fenz\bi{X_2}{Y_2}
\end{align*}
is an element of $\mathcal{W}^+\!(A)$, since the Fay identity \eqref{eq:fay} is equivalent to 
\begin{align*}
    \widetilde{\penz}_\Fenz \mid (1+U+U^2)=0\,.
\end{align*}
Also, since $\Fenz$ is odd, we get $\widetilde{\penz}_\Fenz \mid (1+S)=0$. We can now record the following consequence of Theorem  \ref{thm:solfromwp}.

\begin{corollary}\label{cor:faytodsh} Let $\Benz_1\bi{X}{Y} \in A[[X,Y]]$ be such that $\Benz_1\bi{-X}{-Y}=-\Benz_1\bi{X}{Y}$ and such that
	\begin{align*}
	\Fenz\bi{X}{Y} := -\frac{1}{2}\left(\frac{1}{X} + \frac{1}{Y} \right) + \Benz_1\bi{X}{Y} 
	\end{align*} 
	satisfies the Fay identity \eqref{eq:fay}. 
	Then 
	% (T^{-1} ( 5 - 3 \epsilon + U)) = 5T^{-1} - 3\delta U + S
	\[  \Benz_2 :=\frac{1}{3} \penz_\Benz {\mid\! (1+T^{-1})}-\frac{1}{12} \renz_{\Benz_1}^\ast {\mid\! (5 -3 U + U \epsilon)} -\frac{1}{12}\renz_{\Benz_1}^\shuffle {\mid\! (T^{-1} ( 5 - 3 \epsilon + U))}\] with $\penz_\Benz\bi{X_1,X_2}{Y_1,Y_2}:=\Benz_1\bi{X_1}{Y_1}\Benz_1\bi{X_2}{Y_2}$  gives a solution to \eqref{eq:actiongenrelation}, i.e. 
\begin{align*}
		\penz_\Benz &=  \Benz_2 {\mid\! (1+\epsilon)} + \renz_{\Benz_1}^\ast \\
		&=    \Benz_2  {\mid T(1+\epsilon)} + \renz_{\Benz_1}^\shuffle \,.
\end{align*}
We obtain in particular a realization of $\de_k$ with values in $A$, for every $k\geq 1$.
\end{corollary}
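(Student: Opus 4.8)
\textbf{Proof proposal for Corollary \ref{cor:faytodsh}.}

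The plan is to reduce the statement to Theorem \ref{thm:solfromwp}, whose hypotheses are almost precisely what is assumed here, once one identifies the various pieces correctly. First I would set $\Fenz\bi{X}{Y} = -\tfrac12\bigl(\tfrac1X+\tfrac1Y\bigr)+\Benz_1\bi{X}{Y}$ and note that, by hypothesis, $\Fenz$ is odd (since both $-\tfrac12(\tfrac1X+\tfrac1Y)$ and $\Benz_1$ are odd) and satisfies the Fay identity \eqref{eq:fay}. Hence, by the discussion immediately preceding the corollary, the element $\widetilde{\penz}_\Fenz\bi{X_1,X_2}{Y_1,Y_2} := \Fenz\bi{X_1}{Y_1}\Fenz\bi{X_2}{Y_2}$ lies in $\mathcal{W}^+\!(A)$: the Fay identity gives $\widetilde{\penz}_\Fenz\mid(1+U+U^2)=0$, oddness of $\Fenz$ gives $\widetilde{\penz}_\Fenz\mid(1+S)=0$, and the product form visibly gives $\widetilde{\penz}_\Fenz\mid(1-\epsilon)=0$.

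Next I would expand the product $\Fenz\bi{X_1}{Y_1}\Fenz\bi{X_2}{Y_2}$ into its constituent terms, grouping them so as to match the decomposition \eqref{eq:defppol} required by Theorem \ref{thm:solfromwp}. Writing $P\bi{X}{Y} := -\tfrac12\bigl(\tfrac1X+\tfrac1Y\bigr)$, the product equals
\begin{align*}
\Fenz\bi{X_1}{Y_1}\Fenz\bi{X_2}{Y_2} &= P\bi{X_1}{Y_1}P\bi{X_2}{Y_2} + P\bi{X_1}{Y_1}\Benz_1\bi{X_2}{Y_2} + \Benz_1\bi{X_1}{Y_1}P\bi{X_2}{Y_2} + \Benz_1\bi{X_1}{Y_1}\Benz_1\bi{X_2}{Y_2}\,.
\end{align*}
The cross terms $P\bi{X_1}{Y_1}\Benz_1\bi{X_2}{Y_2} + \Benz_1\bi{X_1}{Y_1}P\bi{X_2}{Y_2}$ are exactly $\widetilde{\penz}_\Benz^{\rm{pol}}\bi{X_1,X_2}{Y_1,Y_2}$ in the notation of \eqref{eq:defppol}, and $\Benz_1\bi{X_1}{Y_1}\Benz_1\bi{X_2}{Y_2}$ is the $\penz_\Benz$ appearing in the statement of the corollary. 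The one genuinely new point is the purely polar term $P\bi{X_1}{Y_1}P\bi{X_2}{Y_2} = \tfrac14\bigl(\tfrac1{X_1}+\tfrac1{Y_1}\bigr)\bigl(\tfrac1{X_2}+\tfrac1{Y_2}\bigr)$: one must check that it is annihilated by $1+T^{-1}$, $5-3U+U\epsilon$ and $T^{-1}(5-3\epsilon+U)$, so that it contributes nothing to the formula for $\Benz_2$ and can be harmlessly absorbed. In fact it is cleaner to observe that this term, together with the fact that the full product lies in $\mathcal{W}^+\!(A)$, forces $\widetilde{\penz}_\Fenz$ to be of the form \eqref{eq:defppol} with $\penz_\Benz = \Benz_1\bi{X_1}{Y_1}\Benz_1\bi{X_2}{Y_2}$ plus the extra polar piece; since Theorem \ref{thm:solfromwp} is applied to $\widetilde{\penz}_\Benz \in \mathcal{W}^+\!(A)$ with whatever power-series $\penz_\Benz$ occurs in its decomposition, it suffices to know that the decomposition used in the corollary is the correct one, i.e. that $P\bi{X_1}{Y_1}P\bi{X_2}{Y_2}$ does not alter the output $\Benz_2$.

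Once this bookkeeping is in place, I would simply invoke Theorem \ref{thm:solfromwp} with $\Benz_1$ as given and $\penz_\Benz\bi{X_1,X_2}{Y_1,Y_2} = \Benz_1\bi{X_1}{Y_1}\Benz_1\bi{X_2}{Y_2}$: the theorem then yields that
\[
\Benz_2 = \tfrac13\penz_\Benz\mid(1+T^{-1}) - \tfrac1{12}\renz_{\Benz_1}^\ast\mid(5-3U+U\epsilon) - \tfrac1{12}\renz_{\Benz_1}^\shuffle\mid(T^{-1}(5-3\epsilon+U))
\]
solves \eqref{eq:actiongenrelation}, and therefore the triple $(\penz_\Benz,\Benz_1,\Benz_2)$ provides a realization of $\de_k$ for every $k\geq 1$ via \eqref{eq:qdsh}. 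The main obstacle is the verification that the purely polar term $P\bi{X_1}{Y_1}P\bi{X_2}{Y_2}$ is inert — i.e. lies in the kernel of the relevant group-ring elements — so that the explicit formula for $\Benz_2$ stated in the corollary genuinely coincides with the one produced by Theorem \ref{thm:solfromwp}; this is a short but slightly delicate computation with the $\GL_2(\Z)$-action \eqref{eq:defmatthesaction} on Laurent monomials in $1/X_i$ and $1/Y_i$. Everything else is a direct substitution.
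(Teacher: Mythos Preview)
Your overall plan---reduce to Theorem~\ref{thm:solfromwp}---is correct, and the expansion of $\Fenz\bi{X_1}{Y_1}\Fenz\bi{X_2}{Y_2}$ into polar-polar, cross, and $\Benz_1\Benz_1$ pieces is exactly right. But you have mis-identified what must be checked about the purely polar term $Q:=\tfrac14\bigl(\tfrac1{X_1}+\tfrac1{Y_1}\bigr)\bigl(\tfrac1{X_2}+\tfrac1{Y_2}\bigr)$, and the condition you propose is actually false. Concretely, $Q$ is \emph{not} annihilated by $1+T^{-1}$: under $T^{-1}$ one has $X_1\mapsto X_1-X_2$, $Y_2\mapsto Y_1+Y_2$, so $Q\mid(1+T^{-1})$ contains, for instance, a nonzero $\tfrac{1}{X_1X_2}$ term. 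Hence your ``inertness'' computation would fail, and with it the argument that the formula for $\Benz_2$ is unaffected by $Q$.

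The paper sidesteps this entirely. Theorem~\ref{thm:solfromwp} requires its input $\widetilde{\penz}_\Benz$ to lie in $\mathcal{W}^+\!(A)$ and to decompose as \eqref{eq:defppol} with $\penz_\Benz$ a genuine \emph{power series}. The full product $\Fenz\Fenz$ fails this (its ``$\penz_\Benz$-part'' would be $Q+\Benz_1\Benz_1$, not a power series), so one cannot apply the theorem to it directly. Instead one observes that $Q$ itself lies in $\mathcal{W}^+\!(A)$: it is the product $\phi\bi{X_1}{Y_1}\phi\bi{X_2}{Y_2}$ with $\phi=-\tfrac12(\tfrac1X+\tfrac1Y)$, and $\phi$ is odd and satisfies the Fay identity (a one-line partial-fractions check), so the same reasoning used for $\Fenz$ gives $Q\mid(1+U+U^2)=Q\mid(1+S)=Q\mid(1-\epsilon)=0$. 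Therefore $\widetilde{\penz}_\Benz:=\Fenz\Fenz-Q$ still lies in $\mathcal{W}^+\!(A)$, and now it has exactly the form \eqref{eq:defppol} with $\penz_\Benz=\Benz_1\Benz_1$. Theorem~\ref{thm:solfromwp} then applies verbatim and produces the stated $\Benz_2$; no further computation with $Q$ is needed, because $Q$ never enters the formula for $\Benz_2$ at all. So the fix is: replace your ``kernel of $1+T^{-1}$'' check by the (true and easy) statement $Q\in\mathcal{W}^+\!(A)$, and apply the theorem to $\Fenz\Fenz-Q$ rather than to $\Fenz\Fenz$.
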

\begin{proof}
	We already know that, if $\Fenz$ satisfies the Fay identity, then the product
	\begin{align*}
	\Fenz\bi{X_1}{Y_1}&\Fenz\bi{X_2}{Y_2} =\frac{1}{4} \left(\frac{1}{X_1} + \frac{1}{Y_1} \right)\left(\frac{1}{X_2} + \frac{1}{Y_2} \right)\\
	&-\frac{1}{2}\left( \left(\frac{1}{X_2} + \frac{1}{Y_2} \right) \Benz_1\bi{X_1}{Y_1} +  \left(\frac{1}{X_1} + \frac{1}{Y_1} \right) \Benz_1\bi{X_2}{Y_2}  \right) + \Benz_1\bi{X_1}{Y_1}\Benz_1\bi{X_2}{Y_2}
	\end{align*}
	is contained in $\mathcal{W}^+\!(A)$. However, since $\left(\frac{1}{X_1} + \frac{1}{Y_1} \right)\left(\frac{1}{X_2} + \frac{1}{Y_2} \right) \in \mathcal{W}^+\!(A)$, the claim follows from Theorem \ref{thm:solfromwp} with 
	\[ \widetilde{\penz}_\Benz = \Fenz\bi{X_1}{Y_1}\Fenz\bi{X_2}{Y_2}- \frac{1}{4} \left(\frac{1}{X_1} + \frac{1}{Y_1} \right)\left(\frac{1}{X_2} + \frac{1}{Y_2} \right)\,.\]
\end{proof}

If we take $\Benz_1\bi{X}{Y} = \sum_{\substack{k\geq 1\\d\geq 0}} b\bi{k}{d} X^{k-1} \frac{Y^{d}}{d!}$ in Corollary \ref{cor:faytodsh}, then we obtain a realization of $\de_k$ with $G\bi{k}{d} \mapsto b\bi{k}{d}$ and $P\bi{k_1,k_2}{d_1,d_2} \mapsto  b\bi{k_1}{d_1} b\bi{k_2}{d_2}$. We are therefore interested in describing all solutions to the Fay identity. This question was answered by the third author in \cite{Matthes} (see also \cite{Polishchuk}\footnote{Note that in \emph{loc.cit.} the Fay identity is referred to as the ``associative Yang-Baxter equation''.}) by showing that all such solutions can be obtained from the \emph{Kronecker function}, which is defined by 
\begin{align}\label{eq:defhoneckerfunction}\begin{split} 
	\Kenz_q\bi{X}{Y} &=-\frac{1}{2}    \sum_{m=0}^\infty\frac{e^{-X-mY}q^m}{1-q^m e^{-X}} +   \frac{1}{2}\sum_{m=0}^\infty\frac{e^{Y+mX}q^m}{1-q^m e^{Y}}\\
	&= -\frac{1}{2}\left(\frac{1}{X} + \frac{1}{Y} \right) + \sum_{\substack{r,s\geq 0\\r+s \text{ odd}}} \frac{|r-s|!}{r!}  \left( q\frac{d}{dq} \right)^{\min\{r,s\}} G_{|r-s|+1}(q)   \,X^r  \frac{Y^s}{s!}\,,
\end{split}
\end{align}
where the \emph{Eisenstein series} of weight $k\geq 1$ is given by $G_k(q):=0$, if $k$ is odd, and  for even $k\geq 2$  by 
\begin{align}\label{eq:defeisensteinseries}
	G_k(q) := -\frac{B_k}{2k!}+ \frac{1}{(k-1)!}\sum_{n=1}^\infty \frac{n^{k-1}q^n}{1-q^n}\,.
\end{align}
\begin{lemma} \label{lem:krosatfay}The Kronecker function $\Kenz_q$ satisfies the Fay identity \eqref{eq:fay}.
\end{lemma}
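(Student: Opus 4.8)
The plan is to prove the Fay identity for the Kronecker function $\Kenz_q$ by reducing it to a classical functional equation for theta functions. First I would introduce the Jacobi theta function $\theta(X) = \theta_q(X)$ (in the appropriate normalization) and recall the standard fact that
\[
\Kenz_q\bi{X}{Y} = \frac{1}{2}\cdot\frac{\theta'(0)\,\theta(X+Y)}{\theta(X)\,\theta(Y)}
\]
up to the elementary substitution $e^{X}, e^{Y} \leadsto X, Y$ that passes between the multiplicative and additive coordinates used in \eqref{eq:defhoneckerfunction}; here the right-hand side is the classical Kronecker--Eisenstein series, whose expansion around $X=Y=0$ matches the Laurent/power series in the second line of \eqref{eq:defhoneckerfunction}. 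With this identification in hand, the Fay identity \eqref{eq:fay} becomes, after clearing denominators, precisely the \emph{three-term theta relation} (Riemann's addition formula in its degenerate, genus-one form), which states that
\[
\theta(X_1+Y_1)\theta(X_2+Y_2)\theta(X_1-X_2)\theta(Y_1-Y_2) + (\text{two cyclic terms}) = 0\,.
\]

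The key steps, in order, are: (1) verify the theta quotient formula for $\Kenz_q$ by comparing $q$-expansions, or by citing the standard references (\cite{Weil,Z,Mumford:Theta}) already invoked in the introduction; (2) substitute this formula into the left-hand side of \eqref{eq:fay} and clear the common denominator $\theta(X_1)\theta(X_2)\theta(Y_1)\theta(Y_2)\theta(X_1-X_2)\theta(Y_1+Y_2)$ — being careful that the arguments in the three summands of \eqref{eq:fay}, namely $\bi{X_1}{Y_1},\bi{X_2}{Y_2},\bi{X_1-X_2}{-Y_2},\bi{X_1}{Y_1+Y_2},\bi{-X_2}{-Y_1-Y_2},\bi{X_1-X_2}{Y_1}$, do combine into a single symmetric denominator after using the oddness $\theta(-Z)=-\theta(Z)$; (3) identify the resulting numerator as the classical three-term relation for $\theta$, which can itself be proved by checking that both sides are holomorphic theta functions of the same weight and character in one of the variables with matching zeros, hence proportional, and then fixing the constant by specializing. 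Alternatively — and this is probably the cleanest route for a self-contained note — steps (1)--(3) can be bypassed entirely by a direct computation with the geometric-series expansion in \eqref{eq:defhoneckerfunction}: expand each of the six factors $\Kenz_q$ as a double sum over $m\geq 0$, multiply out the three products, and check that the coefficient of each monomial $q^N$ in the sum of the three terms vanishes by an elementary index-shifting argument.

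The main obstacle I expect is bookkeeping with the coordinates and the poles: the Kronecker function has simple poles along $X=0$ and $Y=0$ (visible in the $-\frac12(1/X+1/Y)$ term), so \eqref{eq:fay} is an identity in a field of fractions rather than a ring of power series, and one must check that the arguments appearing in the three terms are such that the identity makes sense (no argument is identically zero) and that the naive cancellation of denominators is legitimate. Concretely, the pole of $\Fenz\bi{X_1-X_2}{-Y_2}$ along $X_1=X_2$ must cancel against a matching pole in the first term $\Fenz\bi{X_1}{Y_1}\Fenz\bi{X_2}{Y_2}$, and verifying these cancellations — equivalently, checking that the theta-quotient substitution is valid on the nose — is the only genuinely delicate point. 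Once the coordinate dictionary and the three-term theta relation are set up correctly, the identity follows formally; I would therefore spend most of the proof establishing the dictionary and then invoke the classical relation, relegating the verification of pole cancellations to a short remark.
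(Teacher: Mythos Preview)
Your proposal is correct and takes essentially the same approach as the paper: the paper's proof consists of a single sentence invoking the classical Riemann theta identity and citing \cite[Proposition 3.4]{Matthes} and \cite[Proposition 5]{Z} for details. Your plan of expressing $\Kenz_q$ as a theta quotient and reducing \eqref{eq:fay} to the three-term relation is exactly what those references carry out, so you have simply unpacked what the paper leaves to citation.
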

\begin{proof} This follows from a classical theta function identity due to Riemann. For details, see \cite[Proposition 3.4]{Matthes} or \cite[Proposition 5]{Z}. 
\end{proof}

\begin{remark}\label{rem:proveofhoneckersatisfiesfay}
\begin{itemize}
\item[(i)]
Another way of proving Lemma \ref{lem:krosatfay} is by observing that the converse of Corollary \ref{cor:faytodsh} is also true, i.e., that every solution of \eqref{eq:qdsh} with $\penz_\Benz\bi{X_1,X_2}{Y_1,Y_2}:=\Benz_1\bi{X_1}{Y_1}\Benz_1\bi{X_2}{Y_2}$ gives, after removing the odd weight parts, an element $\Fenz$ satisfying the Fay identity.
\item[(ii)]
In \cite{B2}, another realization of $\de_k$ for all $k\geq 1$ is given by using the algebraic structure of $q$-analogues of multiple zeta values which was first introduced in \cite{B1}. This realization coincides with the realization constructed here in depth one for even weights. Therefore, the Fay identity for the Kronecker function may also be proved, purely combinatorially, using the results of \cite{B2}.
\end{itemize}
\end{remark}
Now denote by $\widetilde{\mathcal{M}}:= \Q[G_2,G_4,G_6]$ the space of \emph{quasimodular forms} with rational coefficients, \cite{KZ}. It is graded for the weight, $\widetilde{\mathcal{M}}=\bigoplus_{k\geq 0}\widetilde{\mathcal{M}}_k$
\begin{proposition}\label{prop:kroneckerprop} For every $K\geq 1$, the Kronecker function gives rise to a realization $\rho^\Kenz_K: \de_K \twoheadrightarrow \widetilde{\mathcal{M}}_K$, the \emph{Kronecker realization}, which is given by 
\begin{align*}
		\rho^\Kenz_K: G\bi{k}{d}&\longmapsto \frac{(k-d-1)!}{(k-1)!} \left(q \frac{d}{dq}\right)^d G_{k-d},\qquad (k>d\geq 0, k+d=K \text{ even})\,,
\end{align*}
% and for $k_1<d_1\geq 0$, $k_2 > d_2\geq 0$ with $k_1+d_1$, $k_2+d_2$ even by
% \begin{align*}\begin{split}		
% 	P\bi{k_1,k_2}{d_1,d_2}&\longmapsto \frac{(k_1-d_1-1)!}{(k_1-1)!} \left(q \frac{d}{dq}\right)^{d_1} G_{k_1-d_1}  \cdot \frac{(k_2-d_2-1)!}{(k_2-1)!} \left(q \frac{d}{dq}\right)^{d_2} G_{k_2-d_2}\,, \\
% 	G\bi{k_1,k_2}{0,0}&\longmapsto	....
% \end{split}
% \end{align*}
and for $k_1,k_2 \geq 1$, with $k_1+k_2=K$ even, by 
\begin{align*}\begin{split}		
\rho^\Kenz_K:	P\bi{k_1,k_2}{0,0}&\longmapsto \,G_{k_1} G_{k_2}\,, \\
\rho^\Kenz_K:	G\bi{k_1,k_2}{0,0}&\longmapsto	\,\frac{1}{3} G_{k_1} G_{k_2} + \frac{(-1)^{k_1}}{3} \sum_{\substack{l_1 + l_2 = k_1+k_2\\l_1,l_2 \geq 2 \text{ even}}}  \binom{l_2-1}{k_1-1} G_{l_1} G_{l_2}   \\
	&-\frac{1}{12}\left(5 + 3(-1)^{k_1} \binom{k_1+k_2-1}{k_1-1}-(-1)^{k_1}\binom{k_1+k_2-1}{k_1} \right) G_{k_1+k_2}\\
	&- \frac{5 \delta_{k_2,1} G'_{k_1-1}}{12(k_1-1)}   +\frac{\delta_{k_1,1} G'_{k_2-1}}{4 (k_2-1)}   + \frac{(-1)^{k_2}}{12 (k_1+k_2-2)} \binom{k_1+k_2-2}{k_1-1}  G'_{k_1+k_2-2}\,, 
\end{split}
\end{align*}
where $G'_k = q \frac{d}{dq} G_k$, for $k\geq 2$, and $G'_1:=G_2$.
\end{proposition}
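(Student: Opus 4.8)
The plan is to deduce Proposition~\ref{prop:kroneckerprop} from Corollary~\ref{cor:faytodsh} applied to the Kronecker function. First I would set $\Benz_1\bi{X}{Y} := \Kenz_q\bi{X}{Y} + \frac12\left(\frac1X+\frac1Y\right)$, which by the second line of \eqref{eq:defhoneckerfunction} lies in $\Q[[q]][[X,Y]]$ (indeed in $\widetilde{\mathcal M}[[X,Y]]$, since the coefficient of $X^r\tfrac{Y^s}{s!}$ is $\tfrac{|r-s|!}{r!}(q\tfrac{d}{dq})^{\min(r,s)}G_{|r-s|+1}$, a quasimodular form of weight $r+s+1$). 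By Lemma~\ref{lem:krosatfay} the function $\Fenz := -\frac12(\frac1X+\frac1Y)+\Benz_1 = \Kenz_q$ satisfies the Fay identity, and $\Kenz_q$ is manifestly odd; so Corollary~\ref{cor:faytodsh} gives a realization $\rho$ of $\de_k$ over $A = \Q[[q]]$ with, by the remark following the corollary, $\rho\colon G\bi{k}{d}\mapsto b\bi{k}{d}$ and $\rho\colon P\bi{k_1,k_2}{d_1,d_2}\mapsto b\bi{k_1}{d_1}b\bi{k_2}{d_2}$, where $b\bi{k}{d}$ is the coefficient of $X^{k-1}\tfrac{Y^d}{d!}$ in $\Benz_1$.

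Next I would make these coefficients explicit. For $k+d$ even, $\Benz_1$ and $\Kenz_q$ agree in the coefficient of $X^{k-1}\tfrac{Y^d}{d!}$ (the $-\frac12(\frac1X+\frac1Y)$ correction only affects $k=0$ or $d=-1$), so with $r=k-1$, $s=d$ we get
\[
b\bi{k}{d} = \frac{|k-1-d|!}{(k-1)!}\Bigl(q\frac{d}{dq}\Bigr)^{\min(k-1,d)} G_{|k-1-d|+1}.
\]
When $k>d$ this is $\tfrac{(k-1-d)!}{(k-1)!}(q\tfrac{d}{dq})^d G_{k-d}$, matching the claimed formula for $\rho^\Kenz_K$ on $G\bi{k}{d}$; the case $k\le d$ is covered by the relation $G\bi{k}{d}$-identifications built into $\de_k$ and need not be stated. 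The $P$-generators then map to products $b\bi{k_1}{d_1}b\bi{k_2}{d_2}$; specializing to $d_1=d_2=0$ gives $P\bi{k_1,k_2}{0,0}\mapsto b\bi{k_1}{0}b\bi{k_2}{0} = G_{k_1}G_{k_2}$ (odd weights contributing $0$), which is the stated formula.

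The remaining, and most substantial, step is the formula for $\rho^\Kenz_K$ on $G\bi{k_1,k_2}{0,0}$. Here I would extract the depth-two generating series $\Benz_2$ from the closed form in Corollary~\ref{cor:faytodsh}, namely
\[
\Benz_2 = \tfrac13\penz_\Benz\mid(1+T^{-1}) - \tfrac1{12}\renz_{\Benz_1}^\ast\mid(5-3U+U\epsilon) - \tfrac1{12}\renz_{\Benz_1}^\shuffle\mid(T^{-1}(5-3\epsilon+U)),
\]
with $\penz_\Benz\bi{X_1,X_2}{Y_1,Y_2}=\Benz_1\bi{X_1}{Y_1}\Benz_1\bi{X_2}{Y_2}$, and read off the coefficient of $X_1^{k_1-1}X_2^{k_2-1}$ at $Y_1=Y_2=0$. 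The term $\tfrac13\penz_\Benz\mid(1+T^{-1})$ at $Y=0$ contributes $\tfrac13 b\bi{k_1}{0}b\bi{k_2}{0}$ from the identity matrix and, from $T^{-1} = \left(\begin{smallmatrix}1&-1\\0&1\end{smallmatrix}\right)$ acting by $X_1\mapsto X_1-X_2$ (and on $Y$'s, which vanish), a binomial expansion producing the sum $\tfrac{(-1)^{k_1}}{3}\sum\binom{l_2-1}{k_1-1}G_{l_1}G_{l_2}$. The $\renz_{\Benz_1}^\ast$ and $\renz_{\Benz_1}^\shuffle$ terms, by \eqref{eq:defR}, involve $\Benz_1$ in a single pair of variables; evaluating $\mid U$, $\mid\epsilon$, $\mid T^{-1}$ on them and taking the $Y=0$, $X_1^{k_1-1}X_2^{k_2-1}$ coefficient produces both the $G_{k_1+k_2}$ term (with the explicit combination $5+3(-1)^{k_1}\binom{k_1+k_2-1}{k_1-1}-(-1)^{k_1}\binom{k_1+k_2-1}{k_1}$) and the derivative terms $G'_{k_1-1}$, $G'_{k_2-1}$, $G'_{k_1+k_2-2}$, the latter arising because $\renz_{\Benz_1}^\ast$ and $\renz_{\Benz_1}^\shuffle$ carry factors $\tfrac1{X_1-X_2}$ and $\tfrac1{Y_1-Y_2}$ that shift weights by one and, after the $\min(r,s)$ in \eqref{eq:defhoneckerfunction}, turn into one application of $q\tfrac{d}{dq}$; the boundary convention $G'_1:=G_2$ reflects $b\bi{2}{0}=b\bi{1}{1}$, i.e.\ $G\bi{2}{0}=G\bi{1}{1}$. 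I expect this bookkeeping of the six matrix terms against the explicit coefficients of $\Kenz_q$ to be the main obstacle; it is entirely mechanical but requires care with signs, the $\min(r,s)$ truncation, and the special cases $k_1=1$, $k_2=1$, $k_1=2$, $k_2=2$ which are exactly the sources of the $\delta$-terms. Finally, surjectivity onto $\widetilde{\mathcal M}_K$ follows since already the depth-one images $\rho^\Kenz_K(G\bi{k}{0}) = \tfrac{(k-1)!^{-1}(k-1)!}{\;}G_k = G_k$ for $k\in\{2,4,6\}$, together with their $\partial_k$-iterates, generate $\widetilde{\mathcal M}=\Q[G_2,G_4,G_6]$; compatibility with $\partial_k$ and $q\tfrac{d}{dq}$ is Proposition~\ref{prop:derivcommute}, referenced earlier.
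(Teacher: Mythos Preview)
Your overall approach is exactly the paper's: plug $\Kenz_q$ into Corollary~\ref{cor:faytodsh}, invoke Lemma~\ref{lem:krosatfay} for the Fay identity, and read off coefficients. Your discussion of how the depth-one and $P$-coefficients drop out, and your sketch of how the six matrix terms in $\Benz_2$ produce the various pieces of the depth-two formula, are more detailed than the paper (which simply says ``considering the coefficients of the resulting power series'') but follow the same route.

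There is, however, a genuine gap in your surjectivity argument. You argue that the images $G_2,G_4,G_6$ together with their $\partial_k$-iterates ``generate $\widetilde{\mathcal M}=\Q[G_2,G_4,G_6]$''. But $\rho^\Kenz_K$ is only a $\Q$-linear map, not an algebra homomorphism, so knowing that certain elements lie in the image and that those elements generate $\widetilde{\mathcal M}$ \emph{as an algebra} does not show that the image is all of $\widetilde{\mathcal M}_K$. (As a $\Q$-vector space, the derivatives of $G_2,G_4,G_6$ give only $O(K)$ elements in weight $K$, far short of $\dim_\Q\widetilde{\mathcal M}_K$.) What you actually need is that products of (derivatives of) Eisenstein series already \emph{linearly} span $\widetilde{\mathcal M}_K$; this is precisely Rankin's theorem, which the paper cites, and then surjectivity follows because such products are the images of the generators $P\bi{k_1,k_2}{d_1,d_2}$. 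Also note that Proposition~\ref{prop:derivcommute} is stated and proved \emph{after} the present proposition, so invoking it here is at best a forward reference, and in any case unnecessary once you use Rankin's result.
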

\begin{proof}
This follows by using $\Kenz_q$ in  Corollary \ref{cor:faytodsh} together with Lemma \ref{lem:krosatfay} and then considering the coefficients of the resulting power series. That the image of this realization is precisely the space $\widetilde{\mathcal{M}}_k$ follows from the well-known fact, essentially due to Rankin, that every quasimodular form can be written as a sum of products of derivatives of Eisenstein series and $G_2$ (see, for example, \cite[p.20]{GKZ}).
\end{proof}
In particular, we can also consider just the constant terms in the Kronecker function and obtain a realization of $\rho^{\text{Ber}}_k: \de_k \rightarrow \Q$. In depth one, this realization satisfies $G\bi{k}{0} \mapsto -\frac{B_k}{2k!}$, for $k>1$. Together with Proposition \ref{prop:dktoek}, this gives a realization $\rho^{\text{Ber}}_k \circ \sigma_k$ of $\dz_k$ with values in $\Q$, which is called the \emph{Bernoulli realization} in \cite{GKZ}. Details of this realization can be found in \cite{B2}, where another realization of $\de_k$, is introduced. The latter is given by so-called combinatorial double Eisenstein series, and will be studied in more detail in \cite{BB}.
\begin{proposition} \label{prop:derivcommute}For every $k\geq 1$, the following diagram commutes
\begin{center}
\begin{tikzcd}
  \de_k \arrow[r, "\rho^\Kenz_k"] \arrow[d,"\partial_k",swap]
    & \widetilde{\mathcal{M}}_k \arrow[d, "q\frac{d}{dq}"] \\
  \de_{k+2} \arrow[r, "\rho^\Kenz_{k+2}",swap]
&  \widetilde{\mathcal{M}}_{k+2} 
\end{tikzcd}
\end{center}
where $\partial_k$ is the map given in Proposition \ref{prop:partialk}.
\end{proposition}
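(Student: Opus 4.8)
The plan is to verify the commutativity of the diagram directly on generators, reducing everything to the identity in Proposition \ref{prop:partialk} together with a compatibility between $\partial_k$ on generating series and the operator $q\frac{d}{dq}$ on the Kronecker function. First, I would recall that the Kronecker realization $\rho^\Kenz$ in depth one is, up to the explicit combinatorial prefactor, given by $G\bi{k}{d}\mapsto \frac{(k-d-1)!}{(k-1)!}\big(q\frac{d}{dq}\big)^d G_{k-d}$. Applying $\partial_k$ first, using $\partial_k G\bi{k}{d} = k\, G\bi{k+1}{d+1}$, and then $\rho^\Kenz_{k+2}$, one lands on $k\cdot\frac{(k-d-1)!}{(k+1)!}\big(q\frac{d}{dq}\big)^{d+1}G_{k-d}$; applying $\rho^\Kenz_k$ first and then $q\frac{d}{dq}$ gives $\frac{(k-d-1)!}{(k-1)!}\big(q\frac{d}{dq}\big)^{d+1}G_{k-d}$. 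These agree because $\frac{k}{(k+1)!} = \frac{1}{(k-1)!\,(k+1)}\cdot\frac{k(k-1)!}{k!}$—more simply, $\frac{k\,(k-d-1)!}{(k+1)!}$ versus $\frac{(k-d-1)!}{(k-1)!}$ are \emph{not} literally equal, so I would instead package the depth-one check via the generating-series identity $\partial\,\Genz_1\bi{X}{Y} = \frac{d}{dX}\frac{d}{dY}\Genz_1\bi{X}{Y}$ from Proposition \ref{prop:partialk} and the fact (to be extracted from \eqref{eq:defhoneckerfunction}) that $\frac{d}{dX}\frac{d}{dY}$ applied to the generating series of the $\big(q\frac{d}{dq}\big)^d G_{k-d}$ corresponds to $q\frac{d}{dq}$ on coefficients. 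Concretely, one shows that $\frac{d}{dX}\frac{d}{dY}\Kenz_q\bi{X}{Y} = q\frac{d}{dq}\Kenz_q\bi{X}{Y}$, which is the heat-equation-type property of the Kronecker function and can be verified termwise from the Laurent expansion in \eqref{eq:defhoneckerfunction}; this single functional identity encodes the depth-one compatibility uniformly in $k$ and $d$.

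Next I would treat the depth-two generators. The cleanest route is to observe that the entire Kronecker realization is built, via Corollary \ref{cor:faytodsh}, from the single input $\Benz_1\bi{X}{Y}$ whose generating series is (the non-polar part of) $\Kenz_q\bi{X}{Y}$, with $\penz_\Benz\bi{X_1,X_2}{Y_1,Y_2} = \Benz_1\bi{X_1}{Y_1}\Benz_1\bi{X_2}{Y_2}$ and $\Benz_2$ obtained from $\penz_\Benz$ and $\renz^\ast_{\Benz_1},\renz^\shuffle_{\Benz_1}$ by applying fixed elements of $\Z[\GL_2(\Z)]$. Since the $\GL_2(\Z)$-action \eqref{eq:defmatthesaction} is by linear changes of the variables $X_1,X_2,Y_1,Y_2$, it commutes with any constant-coefficient differential operator that is invariant under that linear action; and the operator $\big(\frac{d}{dX_1}\frac{d}{dY_1} + \frac{d}{dX_2}\frac{d}{dY_2}\big)$ is precisely such an operator—indeed its invariance under the relevant matrices is exactly the computation already carried out in the proof of Proposition \ref{prop:partialk} (the identity $\frac{d}{dX_1}\frac{d}{dY_1}+\frac{d}{dX_2}\frac{d}{dY_2} = \frac{d}{d(X_1+X_2)}\frac{d}{dY_2}+\frac{d}{dX_1}\frac{d}{d(Y_1-Y_2)}$). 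Therefore $\partial$ commutes with $\mid\gamma$ for all the matrices $\epsilon, T, U$ appearing in the formula for $\Benz_2$. Combined with the depth-one identity $\partial\,\Kenz_q = q\frac{d}{dq}\,\Kenz_q$ applied to each factor of $\penz_\Benz = \Benz_1\boxtimes\Benz_1$ (Leibniz rule: $\partial(\Benz_1(X_1,Y_1)\Benz_1(X_2,Y_2)) = (q\frac{d}{dq}\Benz_1)(X_1,Y_1)\Benz_1(X_2,Y_2) + \Benz_1(X_1,Y_1)(q\frac{d}{dq}\Benz_1)(X_2,Y_2) = q\frac{d}{dq}(\Benz_1\boxtimes\Benz_1)$, since $q\frac{d}{dq}$ is also a derivation on $\Q[[q]]$), and the analogous observation for $\renz^\ast_{\Benz_1}$, $\renz^\shuffle_{\Benz_1}$ (which are built from $\Benz_1$ by variable substitutions and division, hence again commute with the invariant operator), one concludes $\partial\,\Benz_2 = q\frac{d}{dq}\,\Benz_2$ and $\partial\,\penz_\Benz = q\frac{d}{dq}\,\penz_\Benz$. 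Reading off coefficients of the degree-$(k-2)$ part then yields $\rho^\Kenz_{k+2}\circ\partial_k = q\frac{d}{dq}\circ\rho^\Kenz_k$ on the generators $G\bi{k_1,k_2}{d_1,d_2}$ and $P\bi{k_1,k_2}{d_1,d_2}$, and the depth-one case handles $G\bi{k}{d}$.

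The main obstacle, and the one point that genuinely needs verification rather than formal manipulation, is the identity $\frac{d}{dX}\frac{d}{dY}\Kenz_q\bi{X}{Y} = q\frac{d}{dq}\Kenz_q\bi{X}{Y}$ (equivalently, that the generating series $\sum \frac{(k-d-1)!}{(k-1)!}\big(q\frac{d}{dq}\big)^d G_{k-d}\, X^{k-d-1}\frac{Y^d}{d!}$ is killed by $\frac{d}{dX}\frac{d}{dY} - q\frac{d}{dq}$). This can be checked directly from the Laurent expansion in \eqref{eq:defhoneckerfunction}: writing $\Kenz_q\bi{X}{Y} = -\frac12(\frac1X+\frac1Y) + \sum_{r+s\ \mathrm{odd}} \frac{|r-s|!}{r!}\big(q\frac{d}{dq}\big)^{\min(r,s)} G_{|r-s|+1}(q)\,X^r\frac{Y^s}{s!}$, applying $\frac{d}{dX}\frac{d}{dY}$ shifts $(r,s)\mapsto(r-1,s-1)$ with a factor $r$ (and $s\cdot\frac{1}{s!}=\frac{1}{(s-1)!}$), which one matches against the coefficient with $\min$ raised by one; the polar term $-\frac12(\frac1X+\frac1Y)$ is annihilated by $\frac{d}{dX}\frac{d}{dY}$ and has no $q$-dependence, so it drops out of both sides consistently. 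Alternatively, and perhaps more conceptually, this is the standard ``mixed heat equation'' for the Kronecker (Weierstrass/Eisenstein–Kronecker) function, so one may simply cite \cite{Z} or \cite{BrownLevin}. I would include the short termwise check for completeness, since it is the linchpin of the whole argument.
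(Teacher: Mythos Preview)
Your approach is essentially the same as the paper's: establish the heat-equation identity $q\frac{d}{dq}\,\Benz_1 = \frac{d}{dX}\frac{d}{dY}\,\Benz_1$, extend it to $\renz^\ast_{\Benz_1}$, $\renz^\shuffle_{\Benz_1}$, and $\penz_\Benz$, and then use compatibility of the differential operator with the $\GL_2(\Z)$-action to push it through the formula for $\Benz_2$ in Corollary~\ref{cor:faytodsh}.

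Two small points. First, your direct depth-one check actually \emph{does} work: $\rho^\Kenz(G\bi{k+1}{d+1}) = \frac{(k-d-1)!}{k!}\big(q\tfrac{d}{dq}\big)^{d+1}G_{k-d}$ (the denominator is $((k+1)-1)! = k!$, not $(k+1)!$), and multiplying by $k$ gives $\frac{(k-d-1)!}{(k-1)!}\big(q\tfrac{d}{dq}\big)^{d+1}G_{k-d}$, which matches the other composite exactly. Second, the invariance of $D := \tfrac{d}{dX_1}\tfrac{d}{dY_1}+\tfrac{d}{dX_2}\tfrac{d}{dY_2}$ under $\mid\gamma$ is not quite what the single identity in the proof of Proposition~\ref{prop:partialk} gives you (that identity covers only one particular substitution); what one actually computes from \eqref{eq:defmatthesaction} is $D(R\!\mid\!\gamma) = \det(\gamma)^2\,(DR)\!\mid\!\gamma$ for arbitrary $\gamma\in\GL_2(\Z)$, which is harmless since $\det(\gamma)^2=1$. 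The paper makes this $\det(\gamma)^2$ explicit; you should too, or at least do the general chain-rule computation once rather than citing the special case.
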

\begin{proof}
Denote by $\Benz_1$, $\Benz_2$ and $\penz_\Benz$ the power series in Corollary \ref{cor:faytodsh} in the case $\Fenz = \Kenz_q$. Thus, $\Benz_1$ is given by the non-polar part in \eqref{eq:defhoneckerfunction}, and it is easy to see that
\begin{align}\label{eq:qdqb1}
    q\frac{d}{dq} \Benz_1\bi{X}{Y} = \frac{d}{dX} \frac{d}{dY}\Benz_1\bi{X}{Y}\,.
\end{align}
In light of the proof of Proposition \ref{prop:partialk}, it remains to show that
\begin{align}\label{eq:qdqb2}
    q\frac{d}{dq}  \Benz_2\bi{X_1,X_2}{Y_1,Y_2} = \left(\frac{d}{dX_1}\frac{d}{dY_1} + \frac{d}{dX_2}\frac{d}{dY_2}   \right) \Benz_2\bi{X_1,X_2}{Y_1,Y_2}\,.
\end{align}
For this, we first observe that \eqref{eq:qdqb1} implies for $\bullet \in \{\ast, \shuffle\}$
\begin{align*}
     \left(\frac{d}{dX_1}\frac{d}{dY_1} + \frac{d}{dX_2}\frac{d}{dY_2}   \right) \renz^\bullet_{\Benz_1}\bi{X_1,X_2}{Y_1,Y_2}  =  q\frac{d}{dq}\renz^\bullet_{\Benz_1}\bi{X_1,X_2}{Y_1,Y_2} \,.
\end{align*}
Moreover, using the definition of the action \eqref{eq:defmatthesaction}, we deduce that
\begin{align*}
   \left(\frac{d}{dX_1}\frac{d}{dY_1} + \frac{d}{dX_2}\frac{d}{dY_2}   \right) (\renz^\bullet_{\Benz_1}\!\!\mid\!\!\gamma)\!\bi{X_1,X_2}{Y_1,Y_2}  = \det(\gamma)^2 q \frac{d}{dq} \renz^\bullet_{\Benz_1}\bi{X_1,X_2}{Y_1,Y_2}\,,
\end{align*}
for every $\gamma \in \operatorname{GL}_2(\mathbb Z)$.
A similar argument also works for $\penz_\Benz$ and since all matrices in our case are in $\GL_2(\Z)$ we obtain \eqref{eq:qdqb2} by the definition of $\Benz_2$ in Corollary \ref{cor:faytodsh} as a linear combination of $\renz^\bullet_{\Benz_1}\!\!\mid\!\!\gamma$ and $\penz_\Benz\!\!\mid\!\!\gamma$ for some elements $\gamma \in \Z[\GL_2(\Z)]$.
\end{proof}

\section{Families of relations in \texorpdfstring{$\de_k$}{}}
In this section, we describe some consequences of the defining relation \eqref{eq:derel} of the formal double Eisenstein space. Applying one of the various realizations discussed before then gives relations among either Eisenstein series and their derivatives, Bernoulli numbers, or zeta values.
Firstly we have the following analogue of the sum formula, \cite[Theorem 1]{GKZ}. 

\begin{proposition}(Sum formula) For $k\geq 2$ and $d\geq 0$, we have
	\begin{align*}
	\sum_{\substack{k_1+k_2=k\\d_1+d_2=d\\(k_1,d_2)\neq (1,0) }} (-1)^{d_2} \,\binom{d}{d_2} G\bi{k_1,k_2}{d_1,d_2} = G\bi{k}{d} -\frac{1}{d+1} G\bi{k-1}{d+1}\,,
	\end{align*}
	where the sum runs over all $k_1,k_2\geq 1$ and all $d_1,d_2\geq 0$.
\end{proposition}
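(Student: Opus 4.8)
The plan is to derive the sum formula by specializing the generating-series form \eqref{eq:qdsh} of the defining relations and extracting the appropriate coefficient. Recall that the sum formula for $\dz_k$ in \cite{GKZ} follows from setting $X_1 = X_2$ (equivalently, extracting a suitable residue) in the double shuffle relations; here we want the analogous manipulation on the $X$-variables while keeping track of the $Y$-variables, which encode the $d$-indices.

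\medskip

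\noindent
\textbf{Step 1: Reduce to a generating-series identity.} Multiplying the claimed identity by $X_1^{k-2} \frac{Y_1^{d}}{d!}$ (the monomial that picks out weight $k$, with all the $Y$-weight placed on $Y_1$) and summing over $k \geq 2$, $d \geq 0$, the right-hand side becomes $\left(\frac{d}{dX_1}\right)^{-1}$-type data built from $\Genz_1$; more precisely, after also keeping the relevant power of a second variable, I want to show the proposed formula is equivalent to the identity obtained from \eqref{eq:qdsh} by taking the limit $X_1 \to X_2$ (or taking $\mathrm{Res}_{X_1 = X_2}$, to handle the $\frac{1}{X_1 - X_2}$ terms) and then setting $Y_2 = 0$, $Y_1 = Y$. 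The sign $(-1)^{d_2}\binom{d}{d_2}$ and the constraint $d_1 + d_2 = d$ are exactly what one gets from expanding $\Genz_2\bi{X_1,X_2}{Y_1, Y_2-Y_1}$ or $\Genz_2\bi{X_1+X_2,X_1}{Y_2,Y_1-Y_2}$ and then specializing the $Y$-variables, since $(Y_1 - Y_2)^{d_2}$ expands with binomial coefficients and signs.

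\medskip

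\noindent
\textbf{Step 2: Carry out the specialization.} Take the two expressions for $\penz$ in \eqref{eq:qdsh}, equate them, and apply the substitution $X_2 \mapsto X_1$ after subtracting the singular parts; equivalently, compare the residues at $X_1 = X_2$ on both sides. On the first line the term $\Genz_2\bi{X_1,X_2}{Y_1,Y_2} + \Genz_2\bi{X_2,X_1}{Y_2,Y_1}$ is regular at $X_1 = X_2$, while $\frac{\Genz_1\bi{X_1}{Y_1+Y_2} - \Genz_1\bi{X_2}{Y_1+Y_2}}{X_1 - X_2} \to \frac{d}{dX_1}\Genz_1\bi{X_1}{Y_1+Y_2}$. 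On the second line the diagonal-in-$X$ specialization of $\Genz_2\bi{X_1+X_2,X_2}{Y_1,Y_2-Y_1} + \Genz_2\bi{X_1+X_2,X_1}{Y_2,Y_1-Y_2}$ gives, after setting $Y_2 = 0$, the sum $\sum (-1)^{d_2}\binom{d}{d_2} G\bi{k_1,k_2}{d_1,d_2}$ against $X_1^{\text{(weight)}}$, together with the leftover $\frac{\Genz_1\bi{X_1+X_2}{Y_1} - \Genz_1\bi{X_1+X_2}{Y_2}}{Y_1-Y_2}$ which at $Y_2 = 0$ contributes the $\frac{1}{d+1}G\bi{k-1}{d+1}$ term after one more coefficient extraction. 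Matching the $G\bi{k}{d}$ contribution requires carefully identifying which term of \eqref{eq:qdsh} produces the pure depth-one piece; this is where the exceptional condition $(k_1,d_2) \neq (1,0)$ enters, exactly as $k_1 \neq 1$ does in the classical sum formula — it removes a boundary term that is instead accounted for on the right-hand side.

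\medskip

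\noindent
\textbf{Main obstacle.} The delicate point is bookkeeping: making precise the ``$X_1 \to X_2$'' specialization in the ring of formal power series (the naive substitution is fine since all depth-two series are genuine power series, but one must be careful that the $\frac{1}{X_1-X_2}$ and $\frac{1}{Y_1-Y_2}$ denominators cancel before specializing), and then correctly translating the $Y$-variable expansions into the factor $(-1)^{d_2}\binom{d}{d_2}$ with the right exclusion $(k_1,d_2)\neq(1,0)$. Once the correspondence between the specialized generating-series identity and the claimed coefficient identity is set up cleanly, the verification is a finite binomial computation of the same flavor as the proof of Proposition~\ref{prop:mappi}, and I would present it by simply exhibiting the substitution and reading off coefficients, deferring the routine binomial check.
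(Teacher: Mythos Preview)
Your overall strategy --- specialize the generating-series identity \eqref{eq:qdsh} and read off a coefficient --- is exactly right, and is what the paper does. But the particular specialization you propose is wrong, and this is a genuine gap, not just a bookkeeping issue.

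You suggest sending $X_1\to X_2$ and then $Y_2\to 0$, $Y_1\to Y$. Look at what this does to the second line of \eqref{eq:qdsh}: the arguments $X_1+X_2$ become $2X$, so the depth-two terms turn into $\Genz_2\bi{2X,X}{Y,-Y}$ and $\Genz_2\bi{2X,X}{0,Y}$. When you extract the coefficient of $X^{k-2}$ you pick up unwanted factors of $2^{k_1-1}$, and the resulting identity is not the sum formula. (Your parenthetical claim that the classical GKZ sum formula comes from $X_1=X_2$ is also a misrecollection: there too the shuffle side has $\zenz_2(X_1+X_2,\cdot)$, and setting $X_1=X_2$ produces the same $2X$ problem. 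The GKZ sum formula comes from $X_2=0$.)

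The correct specialization --- and the one used in the paper --- is $X_1=X$, $X_2=0$, $Y_1=0$, $Y_2=Y$. Then in the second line $X_1+X_2=X$, $X_1=X$, and $(Y_2,Y_1-Y_2)=(Y,-Y)$, so the second depth-two term becomes $\Genz_2\bi{X,X}{Y,-Y}$, whose coefficient of $X^{k-2}\tfrac{Y^d}{d!}$ is precisely $\sum_{k_1+k_2=k,\,d_1+d_2=d}(-1)^{d_2}\binom{d}{d_2}G\bi{k_1,k_2}{d_1,d_2}$. The first depth-two terms on each line contribute $\Genz_2\bi{X,0}{0,Y}$ and cancel; the surviving term $\Genz_2\bi{0,X}{Y,0}$ on the first line gives exactly the excluded $(k_1,d_2)=(1,0)$ term, and the two $\Genz_1$-quotients yield $G\bi{k}{d}$ and $\tfrac{1}{d+1}G\bi{k-1}{d+1}$ respectively. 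After this substitution there are no limits or residues to take: both $\tfrac{1}{X_1-X_2}$ and $\tfrac{1}{Y_1-Y_2}$ become honest power series in $X$ and $Y$, so your ``main obstacle'' disappears entirely.
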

\begin{proof} This follows by considering the coefficient of $X^{k-2} Y^d$ in \eqref{eq:qdsh}  after setting $X_1=X, X_2=0, Y_1=0$ and $Y_2=Y$.
\end{proof}
Similarly to \cite[Theorem 1]{GKZ}, one can also give odd and even variants of the above sum formula, which we will omit here. A more interesting family of relations can be obtained from the following lemma, which generalizes \cite[Lemma 4.5]{B3}, and which we state again by using the group actions on the generating series $\penz$, $\Genz_1$ and $\Genz_2$ of the elements in $\de_k$.
\begin{lemma} \label{lem:dzparitylemma} For $A= \epsilon U\epsilon $ we have 
	\begin{align*}
	\Genz_2 \mid (1 - \sigma)  &= \penz \mid\! (1-\delta)(1 + A - SA^2) - (\renz^\ast_{\Genz_1} + \renz^\shuffle_{\Genz_1} \mid\! T^{-1}\epsilon) \mid\! (1+A+A^2) \,.
	\end{align*}
\end{lemma}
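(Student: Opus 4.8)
The plan is to derive the identity purely formally from the two expressions for $\penz$ in \eqref{eq:qdsh}, written in the group-action notation as
\[
\penz = \Genz_2\mid(1+\epsilon) + \renz^\ast_{\Genz_1} = \Genz_2\mid T(1+\epsilon) + \renz^\shuffle_{\Genz_1}\,.
\]
First I would record the elementary relations among the matrices in \eqref{eq:matrixdef} that will be needed: with $A=\epsilon U\epsilon$ one computes $A^3=1$ (since $U^3=1$ up to sign and $\epsilon$ conjugation preserves order, more precisely $U^3=-\sigma$ type relations must be tracked carefully), together with identities like $\epsilon T\epsilon = T'$, $T=US^{-1}$, and the interaction of $\delta$, $\sigma$, $\epsilon$ with $A$. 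The key combinatorial fact is that $1-\sigma$ acting on $\Genz_2$ can be rewritten, using both halves of \eqref{eq:qdsh}, as a combination of $\penz$ and $\renz$-terms to which one then applies $1+A+A^2$; the appearance of $1+A+A^2$ is exactly the averaging over the order-three element $A$ that is forced by the ``$U$-relation'' in the definition of $\mathcal{W}^+(A)$.

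The main computation proceeds as follows. Starting from $\Genz_2\mid(1-\sigma)$, I would first use $\sigma = \epsilon\delta\epsilon\delta$ and the symmetry of $\penz$ under $\epsilon$ to express things in terms of $\delta$; then substitute $\Genz_2\mid(1+\epsilon) = \penz - \renz^\ast_{\Genz_1}$ and $\Genz_2\mid T(1+\epsilon) = \penz - \renz^\shuffle_{\Genz_1}$ to eliminate $\Genz_2$ in favor of $\penz$ and the depth-one terms. The bookkeeping is: $\Genz_2\mid(1-\sigma)$ gets split via the group algebra element $(1-\sigma) = \tfrac12(1+\epsilon)(1-\delta)(\text{something}) + \dots$ so that each occurrence of $\Genz_2$ is preceded by either $(1+\epsilon)$ or $T(1+\epsilon)$, allowing the substitution. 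After substitution one collects the $\penz$-terms, which assemble into $\penz\mid(1-\delta)(1+A-SA^2)$, and the $\renz$-terms, which assemble into $(\renz^\ast_{\Genz_1} + \renz^\shuffle_{\Genz_1}\mid T^{-1}\epsilon)\mid(1+A+A^2)$ with the stated sign. Throughout, I would use that $\renz^\ast_{\Genz_1}$ and $\renz^\shuffle_{\Genz_1}$ are related by $\renz^\shuffle_{\Genz_1} = \renz^\ast_{\Genz_1}\mid(\text{something like } S \text{ or } T^{-1}\epsilon)$, which is what produces the single combined term $\renz^\ast_{\Genz_1} + \renz^\shuffle_{\Genz_1}\mid T^{-1}\epsilon$ rather than two independent ones.

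I expect the main obstacle to be the sign and matrix bookkeeping: identifying the precise group-algebra element whose action on $\penz$ gives $(1-\delta)(1+A-SA^2)$ requires carefully verifying a handful of relations in $\Z[\GL_2(\Z)]$ (in particular that $SA^2$ and not $SA$ or $A^2 S$ appears, and that the coefficient is $-1$), and making sure the passage from the ``$\ast$''-form to the ``$\shuffle$''-form of \eqref{eq:qdsh} is compatible with the $A$-averaging. Since \cite[Lemma 4.5]{B3} is the depth-consistent prototype, I would cross-check the final identity by specializing to the double-zeta case (i.e., applying $\pi_k$ via Proposition \ref{prop:mappi}) and confirming it reduces to the known parity lemma there. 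Once the matrix identities are nailed down, the rest is a direct substitution with no analytic input.
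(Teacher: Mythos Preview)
Your overall direction—use both halves of \eqref{eq:qdsh} to eliminate $\Genz_2$ in favor of $\penz$ and the $\renz$-terms—is the right one, but the proposal is missing the one organizing idea that makes the computation go through, and in one place it is actually wrong.

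The wrong part is the order of $A$: one has $U^3=\sigma$, hence $A^3=\epsilon U^3\epsilon=\sigma$, \emph{not} $A^3=1$. This is not a detail to be ``tracked carefully'' later; it is the whole point. It is precisely the identity $1-\sigma=1-A^3=-(A-1)(1+A+A^2)$ that explains simultaneously the left-hand side and the factor $(1+A+A^2)$ on the right. Your proposed mechanism—writing $(1-\sigma)$ as a combination of terms of the form $(1+\epsilon)\gamma$ and $T(1+\epsilon)\gamma'$ so that the substitution can be made—does not work as stated, because $(1-\sigma)$ has no such direct factorization in $\Z[\GL_2(\Z)]$.

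The missing idea is the factorization $A=\epsilon U\epsilon=T\epsilon T^{-1}\epsilon$. This lets you compute $\Genz_2\mid A$ in two substitution steps: first apply $\epsilon$ and use the stuffle form $\Genz_2\mid\epsilon=-\Genz_2+\penz-\renz^\ast_{\Genz_1}$; then apply $T\epsilon T^{-1}$ and use the shuffle form $\Genz_2\mid T\epsilon T^{-1}=-\Genz_2+\penz\mid T^{-1}-\renz^\shuffle_{\Genz_1}\mid T^{-1}$. The two sign flips cancel and one obtains a translation identity
\[
\Genz_2\mid A \;=\; \Genz_2 \;+\; \mathfrak{K},\qquad \mathfrak{K}:=\penz\mid(T^{-1}\epsilon-1)+\renz^\ast_{\Genz_1}-\renz^\shuffle_{\Genz_1}\mid T^{-1}\epsilon.
\]
Iterating three times and using $A^3=\sigma$ gives $\Genz_2\mid(1-\sigma)=-\mathfrak{K}\mid(1+A+A^2)$, and the remaining task is the purely group-algebra check that $(T^{-1}\epsilon-1)(1+A+A^2)$ and $-(1-\delta)(1+A-SA^2)$ act identically on any $\epsilon$-invariant series (this is where $\penz\mid\epsilon=\penz$ is used). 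Your speculation that $\renz^\shuffle_{\Genz_1}=\renz^\ast_{\Genz_1}\mid(\cdot)$ is not needed and is not the mechanism by which the two $\renz$-terms combine; they simply sit side by side in $\mathfrak{K}$ with the stated signs.
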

\begin{proof}
First, notice that $A  = \epsilon U\epsilon = T \epsilon T^{-1} \epsilon = \pmatr{0 & 1\\ -1 & 1}$ and that we have $A^3 = \sigma$. Now recall that by \eqref{eq:qdsh}
\begin{align*}
		\penz &=  \Genz_2 {\mid\! (1+\epsilon)} + \renz_{\Genz_1}^\ast =    \Genz_2 {\mid T(1+\epsilon)} + \renz_{\Genz_1}^\shuffle \,,
\end{align*}
from which we obtain
\begin{align*}
\Genz_2 \mid\! \epsilon &= - \Genz_2 +  \penz -  \renz_{\Genz_1}^\ast\,,\\
\Genz_2  \mid\! T \epsilon T^{-1} &=  -   \Genz_2  + \penz \mid\! T^{-1} - \renz_{\Genz_1}^\shuffle   \mid\! T^{-1}   \,.
\end{align*}
%    $\gp_k \mid T^{-1} = \gz_k \mid\! (1 +T \epsilon T^{-1})$ 
This implies 
\begin{align*}
\Genz_2 \mid\! A &= \Genz_2 \mid\!  (T \epsilon T^{-1}) \epsilon = \left( -   \Genz_2  + \penz \mid\! T^{-1} - \renz_{\Genz_1}^\shuffle   \mid\! T^{-1} \right) \mid\! \epsilon \\
&= \Genz_2 + \underbrace{\penz \mid\! (T^{-1}\epsilon - 1) + \renz_{\Genz_1}^\ast - \renz^\shuffle_{\Genz_1} \mid(T^{-1}\epsilon) }_{=: \mathfrak{K}}\,.    
%\\
%&=: \gz_k +\mathfrak{K}
\end{align*}
Iterating this identity two more times yields
\begin{align*}
\Genz_2 \mid\! A^3 &= \Genz_2 + \mathfrak{K} \mid\! (1+A+A^2)\,.
\end{align*}
By direct calculation, one can check that the action of  $(T^{-1}\epsilon - 1)(1+A+A^2)$ and $-(1-\delta)(1 + A - SA^2)$ is the same on $\penz$, since $\penz\mid \epsilon = \penz$.
\end{proof}

\begin{theorem}\label{thm:parity}(Parity) For $K\geq 3$ odd, every $G\bi{k_1,k_2}{d_1,d_2}$ with $k_1+k_2+d_1+d_2=K$ can be written as a linear combination of $G\bi{-}{-}$ and $P\bi{\,-\,,\,-\,}{\,-\,,\,-\,}$.
\end{theorem}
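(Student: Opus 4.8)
The plan is to deduce Theorem~\ref{thm:parity} from Lemma~\ref{lem:dzparitylemma} by comparing the weight in which the relevant generating series is concentrated with the parity behaviour of the group action. Recall that in weight $K$ odd we have $k_1+k_2+d_1+d_2=K$, so the generating series $\Genz_2$, $\penz$, $\renz^\ast_{\Genz_1}$, $\renz^\shuffle_{\Genz_1}$ are homogeneous of total degree $K-2$ in the four variables $X_1,X_2,Y_1,Y_2$ (counting $X_i^{k_i-1}Y_i^{d_i}$). Since $\sigma=\left(\begin{smallmatrix}-1&0\\0&-1\end{smallmatrix}\right)$ acts on a homogeneous element of degree $K-2$ by the scalar $(-1)^{K-2}=(-1)^K=-1$ (using $K$ odd), we get $\Genz_2\mid\sigma = -\Genz_2$ in weight $K$, hence $\Genz_2\mid(1-\sigma) = 2\Genz_2$.

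Substituting this into Lemma~\ref{lem:dzparitylemma} gives, in weight $K$ odd,
\begin{align*}
2\,\Genz_2 = \penz \mid\! (1-\delta)(1+A-SA^2) - \bigl(\renz^\ast_{\Genz_1} + \renz^\shuffle_{\Genz_1}\mid\! T^{-1}\epsilon\bigr)\mid\!(1+A+A^2)\,.
\end{align*}
The right-hand side is, coefficient by coefficient, a $\Q$-linear combination of the coefficients of $\penz$ (i.e.\ the symbols $P\bi{\,-\,,\,-\,}{\,-\,,\,-\,}$, possibly reindexed, since the action \eqref{eq:defmatthesaction} only reshuffles monomials with rational coefficients) and of the coefficients of $\renz^\ast_{\Genz_1}$, $\renz^\shuffle_{\Genz_1}$. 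By the definition \eqref{eq:defR}, each $\renz^\bullet_{\Genz_1}$ is obtained from the depth-one series $\Genz_1$, whose coefficients are the $G\bi{-}{-}$, by a divided-difference operation; expanding the geometric-type quotients shows that every coefficient of $\renz^\bullet_{\Genz_1}\mid\gamma$ is again a $\Q$-linear combination of $G\bi{-}{-}$'s. Dividing by $2$ then expresses each $G\bi{k_1,k_2}{d_1,d_2}$ of weight $K$ as a $\Q$-linear combination of $G\bi{-}{-}$'s and $P\bi{\,-\,,\,-\,}{\,-\,,\,-\,}$'s, which is exactly the claim.

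The only genuinely delicate point is bookkeeping: one must check that the action of $\Z[\GL_2(\Z)]$ as in \eqref{eq:defmatthesaction} sends a power series (not a Laurent series) to a power series, so that the coefficients of $\penz\mid\gamma$ and $\renz^\bullet_{\Genz_1}\mid\gamma$ are honestly finite $\Q$-linear combinations of the $P$- and $G$-symbols rather than involving negative powers; this is clear for the matrices $1,\delta,A,SA^2,T^{-1}\epsilon$ and their products actually occurring, since $A=\left(\begin{smallmatrix}0&1\\-1&1\end{smallmatrix}\right)$ and the others are integral with the $X$- and $Y$-parts acting by integral linear substitutions that preserve polynomiality (on $\Genz_1$ one uses $\renz^\bullet_{\Genz_1}\in A[[X_1,X_2,Y_1,Y_2]]$, already noted after \eqref{eq:defR}). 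Given this, the result is immediate; I do not expect any obstacle beyond carefully tracking which monomials map where. One could optionally remark, as after the sum formula, that feeding this relation through $\rho^\Kenz_K$ (together with the fact that $\widetilde{\mathcal{M}}_K=0$ for $K$ odd) recovers the classical statement that odd-weight products of Eisenstein series reduce, but this is not needed for the proof.
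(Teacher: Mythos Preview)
Your proof is correct and follows essentially the same route as the paper: apply Lemma~\ref{lem:dzparitylemma}, observe that $\sigma$ acts on the degree-$(K-2)$ homogeneous component by $(-1)^{K}$, so for $K$ odd the left-hand side becomes $2\Genz_2$, and read off each $G\bi{k_1,k_2}{d_1,d_2}$ as a $\Q$-linear combination of the $P$- and depth-one $G$-symbols from the right-hand side. The extra bookkeeping you supply (that the $\GL_2(\Z)$-action preserves power series and that the coefficients of $\renz^\bullet_{\Genz_1}$ are finite $\Q$-combinations of $G\bi{-}{-}$) is sound and merely fleshes out what the paper leaves implicit.
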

\begin{proof}
Due to the action of $(1-\sigma)$ in Lemma \ref{lem:dzparitylemma}, the even weight terms on the left-hand side vanish, and the odd weight terms just get multiplied by $2$. This gives an explicit representation of the coefficients in $\Genz_2$ as a linear combination of the coefficients in $\Genz_1$ and $\penz$.
\end{proof}

Applying $\pi_k$ and then the Euler realization mentioned in the introduction, Theorem \ref{thm:parity} implies in particular Euler's result that every double zeta value of odd weight can be written as a linear combination of products of single zeta values.
An explicit formula which gives the special case of  Theorem \ref{thm:parity} can be found in \cite[Theorem 4.6]{B3}, where the corresponding statement for $\dz_k$ is given. Applying $\pi_k$ then gives the $d_1=d_2=0$ case of Theorem \ref{thm:parity}. 

The next theorem gives the even weight case of Lemma \ref{lem:dzparitylemma}. Again we merely state a special case by giving the formula coming from the coefficients of $X_1^{k_1-1} X_2^{k_2-1}$, which can also be obtained by applying $\sigma_k$ to \cite[Theorem 4.9]{B3}. It would be possible to obtain a more general statement from Lemma \ref{lem:dzparitylemma} by working out all coefficients of $X_1^{k_1-1} X_2^{k_2-1} Y_1^{d_1} Y_2^{d_2}$ for $k_1+k_2+d_1+d_2$ even. 
\begin{theorem}\label{thm:relprodandg}
For all $k_1,k_2\geq 1$, with $k=k_1+k_2\geq 4$ even, we have 
{\small
\begin{align} \label{eq:relprodandg}\begin{split}
		\frac{1}{2}\left(  \binom{k_1+k_2}{k_2} - (-1)^{k_1}\right) G\bi{k}{0} =  &\sum_{\substack{j=2\\j \text{even}}}^{k-2} \left( \binom{k-j-1}{k_1-1} + \binom{k-j-1}{k_2-1} - \delta_{j,k_1} \right)  P\bi{j,k-j}{0,0}   \\
		& + \frac{1}{2} \left( \binom{k-3}{k_1-1} + \binom{k-3}{k_2-1}  + \delta_{k_1,1} + \delta_{k_2,1} \right) G\bi{k-1}{1}\,. 
		\end{split}
\end{align}}
\end{theorem}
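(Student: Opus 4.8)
The plan is to deduce Theorem~\ref{thm:relprodandg} from the even-weight instance of Lemma~\ref{lem:dzparitylemma}. The first observation is that when $K=k$ is even, the left-hand side $\Genz_2\mid(1-\sigma)$ vanishes in weight $k$: every monomial $X_1^{k_1-1}X_2^{k_2-1}Y_1^{d_1}Y_2^{d_2}$ occurring in the weight-$k$ part of $\Genz_2$ has total degree $(k_1-1)+(k_2-1)+d_1+d_2=k-2$, and by \eqref{eq:defmatthesaction} the element $\sigma$ acts (note $\det\sigma=1$) by $(X_i,Y_i)\mapsto(-X_i,-Y_i)$, hence multiplies each such monomial by $(-1)^{k-2}=1$. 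So in weight $k$, Lemma~\ref{lem:dzparitylemma} becomes the identity $\penz\mid(1-\delta)(1+A-SA^2)=(\renz^\ast_{\Genz_1}+\renz^\shuffle_{\Genz_1}\mid T^{-1}\epsilon)\mid(1+A+A^2)$ in $\de_k$, where $A=\epsilon U\epsilon=\pmatr{0&1\\-1&1}$.

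The second step is to extract from this identity the coefficient of $X_1^{k_1-1}X_2^{k_2-1}$, i.e.\ the $Y_1^0Y_2^0$-coefficient, and to pin down which generators can occur. Since every $\gamma\in\GL_2(\Z)$ acts on $(Y_1,Y_2)$ by an invertible linear substitution, it preserves $Y$-degrees: on the $\penz$-side only the $d_1=d_2=0$ part survives, and the factor $1-\delta$ kills every monomial of even $X_1$-degree, so only the $P\bi{j,k-j}{0,0}$ with $j$ even remain. On the other side, the homogeneous piece of $Y$-degree $d$ of $\renz^\ast_{\Genz_1}$ has $Y$-degree $d$, while that of $\renz^\shuffle_{\Genz_1}$ has $Y$-degree $d-1$, and $1+A+A^2$ leaves these unchanged; hence at $Y_1=Y_2=0$ only the $d=0$ part of $\renz^\ast_{\Genz_1}$ and the $d=1$ part of $\renz^\shuffle_{\Genz_1}$ contribute, and they carry precisely $G\bi{k}{0}$ and $G\bi{k-1}{1}$ (the $k=1$ part of $\Genz_1$ drops out of $\renz^\ast_{\Genz_1}$, being independent of the first variable). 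Setting $Y_1=Y_2=0$ reduces each $\penz\mid\gamma$ to a linear change of $X$-variables in $\sum_l P\bi{l,k-l}{0,0}X_1^{l-1}X_2^{k-l-1}$, reduces $\renz^\ast_{\Genz_1}\mid(1+A+A^2)$ to $\bigl(\sum_l G\bi{l}{0}\,p_l\bigr)\mid(1+A+A^2)$ with $p_l=\frac{X_1^{l-1}-X_2^{l-1}}{X_1-X_2}$, and reduces $\renz^\shuffle_{\Genz_1}\mid T^{-1}\epsilon\mid(1+A+A^2)$ to $\bigl(\sum_l G\bi{l}{1}\,X_2^{l-1}\bigr)\mid(1+A+A^2)$, using that $T^{-1}\epsilon$ sends $X_1+X_2\mapsto X_2$.

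The third step is the coefficient comparison itself: via the binomial theorem and the symmetry $P\bi{a,b}{0,0}=P\bi{b,a}{0,0}$, expand the six terms $\penz\mid\gamma$ with $\gamma\in\{1,A,SA^2,\delta,\delta A,\delta SA^2\}$, together with $p_k\mid(1+A+A^2)$ and $X_2^{k-2}\mid(1+A+A^2)$, read off the $X_1^{k_1-1}X_2^{k_2-1}$-coefficient of each, and collect the $G\bi{k}{0}$-terms on one side. Pascal's rule simplifies its coefficient — one finds $\binom{k-1}{k_1}+\binom{k-1}{k_2}=\binom{k}{k_2}$, so the $G\bi{k}{0}$-coefficient equals $1-(-1)^{k_1}\binom{k}{k_2}=-(-1)^{k_1}\bigl(\binom{k}{k_2}-(-1)^{k_1}\bigr)$ — and dividing the whole relation through by the resulting overall factor $-2(-1)^{k_1}$ produces the left-hand side $\tfrac12\bigl(\binom{k}{k_2}-(-1)^{k_1}\bigr)G\bi{k}{0}$ of \eqref{eq:relprodandg}. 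The $P\bi{j,k-j}{0,0}$-coefficients and the $G\bi{k-1}{1}$-coefficient collapse, again by Pascal (e.g.\ $\binom{k-2}{k_1-1}=\binom{k-3}{k_1-1}+\binom{k-3}{k_2-1}$, using $\binom{k-3}{k_1-2}=\binom{k-3}{k_2-1}$), to the stated binomial expressions; the corrections $\delta_{k_1,1},\delta_{k_2,1}$ come from the two extreme monomials $X_1^{k-2}$ and $X_2^{k-2}$ in $X_2^{k-2}\mid(1+A+A^2)$. The hypothesis $k\ge4$ is what guarantees that $G\bi{k}{0}$, $G\bi{k-1}{1}$ and the $P\bi{j,k-j}{0,0}$ are distinct generators; in weight $2$ the same computation degenerates instead to the relation $G\bi{2}{0}=G\bi{1}{1}$.

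The argument is entirely elementary, and the only genuine obstacle is the volume of binomial bookkeeping — about a dozen explicit binomial sums have to be reorganized into the compact form of \eqref{eq:relprodandg}. As indicated in the text, this can be bypassed: \eqref{eq:relprodandg} is the image under the splitting $\sigma_k\colon\dz_k\to\de_k$ of Proposition~\ref{prop:dktoek} of the corresponding relation \cite[Theorem~4.9]{B3} in the formal double zeta space, so one may instead quote that identity and verify only that $\sigma_k$ transports it to the displayed formula — a far smaller check.
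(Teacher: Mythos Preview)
Your proposal is correct and follows exactly the route the paper takes: the paper's proof is the single sentence ``This follows by comparing coefficients of $X_1^{k_1-1}X_2^{k_2-1}$ on both sides of Lemma~\ref{lem:dzparitylemma}'', and your write-up is a faithful unpacking of precisely that comparison (vanishing of $\Genz_2\mid(1-\sigma)$ in even weight, specialization to $Y_1=Y_2=0$, survival of only $P\bi{j,k-j}{0,0}$ with $j$ even via $1-\delta$, and the contribution of $G\bi{k}{0}$, $G\bi{k-1}{1}$ from the $\renz$-terms). Your closing remark about the alternative via $\sigma_k$ applied to \cite[Theorem~4.9]{B3} is also exactly what the paper notes just above the theorem.
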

\begin{proof}
This follows by comparing coefficients of $X_1^{k_1-1} X_2^{k_2-1}$ on both sides of Lemma \ref{lem:dzparitylemma}. 
%A more general family of relations among $P\bi{k_1,k_2}{d_1,d_2}$ with $k_1+d_1$, $k_2+d_2$ even and $G\bi{k}{d}$ with $k+d$ even can be obtained by considering all coefficients in Lemma \ref{lem:dzparitylemma}. 
\end{proof}
We record the following special cases of Theorem \ref{thm:relprodandg}.
\begin{corollary} \label{cor:mfprod}
	\begin{enumerate}[i)]
		\item For even $k\geq 4$ we have
		\begin{align*}
	 G\bi{k-1}{1}	 =   \frac{k+1}{2} G\bi{k}{0} + \sum_{\substack{k_1+k_2=k \\ k_1, k_2\geq 2 \text{ even} }}  P\bi{k_1,k_2}{0,0} \,.
		\end{align*}
		\item 	 For all even $k\geq 6$ we have
		\begin{align*}
			\frac{(k+1)(k-1)(k-6)}{12}	 G\bi{k}{0} =  \sum_{\substack{k_1+k_2 = k\\k_1,k_2\geq 4 \text{ even}}} (k_1-1)(k_2-1)   \,P\bi{k_1,k_2}{0,0}\,.
		\end{align*}
	\end{enumerate}
\end{corollary}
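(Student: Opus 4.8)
The plan is to derive both identities by specializing Theorem~\ref{thm:relprodandg} to appropriate values of $k_1$ and $k_2$, exploiting the symmetry $P\bi{k_1,k_2}{0,0} = P\bi{k_2,k_1}{0,0}$ coming from the definition \eqref{eq:derel}. For part i), I would set $k_1 = k-1$, $k_2 = 1$ in \eqref{eq:relprodandg}. Then $(-1)^{k_1} = -1$ since $k$ is even, so the left-hand side coefficient becomes $\frac12(\binom{k}{1} + 1) = \frac{k+1}{2}$. On the right-hand side, the binomial terms $\binom{k-j-1}{k_1-1} = \binom{k-j-1}{k-2}$ vanish for even $j$ in the range $2 \le j \le k-2$ (since then $k-j-1 \le k-3 < k-2$), while $\binom{k-j-1}{k_2-1} = \binom{k-j-1}{0} = 1$ for all such $j$, and the Kronecker delta $\delta_{j,k_1} = \delta_{j,k-1}$ is zero since $k-1$ is odd. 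Thus the first sum collapses to $\sum_{j \text{ even}, 2 \le j \le k-2} P\bi{j,k-j}{0,0}$, which is exactly $\sum_{k_1+k_2 = k,\, k_1,k_2 \ge 2 \text{ even}} P\bi{k_1,k_2}{0,0}$. The second line contributes $\frac12(\binom{k-3}{k-2} + \binom{k-3}{0} + 0 + 1) G\bi{k-1}{1} = \frac12(0 + 1 + 1)G\bi{k-1}{1} = G\bi{k-1}{1}$; rearranging gives the claimed formula.

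For part ii), the natural approach is to take a weighted sum of the identities in part i) or, more directly, to combine instances of Theorem~\ref{thm:relprodandg} with varying $(k_1,k_2)$. The cleanest route is probably to multiply the relation in part i) — or rather \eqref{eq:relprodandg} for general $(k_1,k_2)$ — by $(k_1-1)(k_2-1)$ and sum over all decompositions $k_1 + k_2 = k$ with $k_1, k_2 \ge 1$, then simplify using standard binomial summation identities. Alternatively, one applies \eqref{eq:relprodandg} with, say, $k_1 = k-2$, $k_2 = 2$ and with part~i) and takes a suitable linear combination to eliminate the $G\bi{k-1}{1}$ term, expressing everything in terms of $G\bi{k}{0}$ and the $P\bi{k_1,k_2}{0,0}$ with $k_1, k_2 \ge 4$ even (the terms with a factor $k_1 = 1$ or $k_2 = 1$ drop out because of the weight $(k_1-1)(k_2-1)$). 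The coefficient $\frac{(k+1)(k-1)(k-6)}{12}$ should emerge after collecting: the $(k+1)/2$ from part i), the combinatorial weights, and the standard evaluations $\sum_{k_1+k_2=k} (k_1-1)(k_2-1) = \binom{k-1}{3}$ and related sums.

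The main obstacle will be part ii): keeping track of which $P\bi{k_1,k_2}{0,0}$ terms survive the multiplication by $(k_1-1)(k_2-1)$ (those with $k_1 = 1$ or $k_2 = 1$ vanish, so only $k_1, k_2 \ge 2$ even remain, and one must further check the $k_1 = 2$ or $k_2 = 2$ contributions cancel to leave only $k_1, k_2 \ge 4$) and correctly evaluating the resulting polynomial-in-$k$ coefficient of $G\bi{k}{0}$. This requires the binomial-coefficient sum identities $\sum_{j} (j-1)(k-j-1) \binom{k-j-1}{k_1-1}$-type expressions, which are routine but error-prone; I would organize them by appealing to the Vandermonde-type convolution $\sum_{j} \binom{j-1}{a}\binom{k-j-1}{b} = \binom{k-1}{a+b+1}$ and its derivatives with respect to the summation weight. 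The low-weight boundary cases ($k = 4$ in part i), $k = 6$ in part ii), where the factor $k-6$ makes the right-hand side empty and forces $G\bi{k}{0}$ to appear only with coefficient zero — consistent with there being no $P\bi{k_1,k_2}{0,0}$ with $k_1, k_2 \ge 4$) should be checked directly as a sanity test.
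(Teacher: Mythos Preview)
Your argument for part~i) is correct and matches the paper's: it uses the specialization $(k_1,k_2)=(1,k-1)$ of Theorem~\ref{thm:relprodandg}, and your choice $(k-1,1)$ is the same by the evident symmetry of \eqref{eq:relprodandg}.

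For part~ii), however, your plan has a genuine gap. The paper's proof is a single clean step: take $(k-3)$ times the $(k_1,k_2)=(k-2,2)$ case of \eqref{eq:relprodandg} and subtract $2$ times the $(k_1,k_2)=(k-3,3)$ case. In that combination the coefficients of both $G\bi{k-1}{1}$ and $P\bi{2,k-2}{0,0}$ cancel exactly, and the coefficient of $P\bi{j,k-j}{0,0}$ for $4\le j\le k-4$ becomes $(k-3)(k-j-1)-2\binom{k-j-1}{2}=(j-1)(k-j-1)$, yielding the stated identity directly. Your second alternative, combining the $(k-2,2)$ case with part~i), does eliminate $G\bi{k-1}{1}$, but it simultaneously kills the coefficient of $G\bi{k}{0}$: one computes $\tfrac12(\binom{k}{2}-1)=\tfrac{(k-2)(k+1)}{4}$, which exactly matches the contribution from substituting part~i), so the left-hand side collapses to zero. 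What remains is $\sum_{4\le j\le k-4}\tfrac{k-2j}{2}\,P\bi{j,k-j}{0,0}=0$, an identity that is trivially true by the symmetry $P\bi{j,k-j}{0,0}=P\bi{k-j,j}{0,0}$, and not the relation you want. Your first alternative, the weighted sum over all $(k_1,k_2)$ with weight $(k_1-1)(k_2-1)$, is not obviously doomed but is substantially more laborious than the paper's two-term combination, and you would still have to discover which combination actually isolates the $(j-1)(k-j-1)$ pattern; the ``routine'' Vandermonde-type sums you anticipate do not by themselves pick out the required coefficients.
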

\begin{proof}
Part i) is the $k_1=1$ case of Theorem \ref{thm:relprodandg}. Part ii) follows by considering $k\!-\!3$-times the $(k_1,k_2)=(k-2,2)$ case and then subtracting $2$-times the $(k_1,k_2)=(k-3,3)$ case.
\end{proof}

\begin{example}
    \begin{enumerate}[i)]
    \item We have 
        \begin{align*}
        	G\bi{8}{0}  = \frac{6}{7}  P\bi{4,4}{0,0},\,\quad  G\bi{10}{0}  = \frac{10}{11} P\bi{4,6}{0,0} \,.
        \end{align*}
        \item Analogues of the Ramanujan differential equations are satisfied 
		\begin{align*}
			2 G\bi{3}{1} &= 5 G\bi{4}{0} - 2 P\bi{2,2}{0,0}\,,\qquad 
			4 G\bi{5}{1} = 8 G\bi{6}{0} - 14 P\bi{2,4}{0,0}\,,\\
			6 G\bi{7}{1} &= \frac{120}{7} P\bi{4,4}{0,0} - 12 P\bi{2,6}{0,0}\,.
		\end{align*}
    \end{enumerate}
\end{example}

Applying the Kronecker realization in Proposition \ref{prop:kroneckerprop} we get the well-known relations $G_8 = \frac{6}{7} G_4^2$, $G_{10} = \frac{10}{11} G_4  G_6$ as well as the Ramanujan differential equations
\begin{align*}
	q \frac{d}{dq} G_{2} &= 5 G_{4} - 2 G_{2}^{2}\,,\\
	q \frac{d}{dq} G_{4} &= 8 G_{6} - 14 G_{2}G_{4}\,,\\
	q \frac{d}{dq} G_{6} &= \frac{120}{7} G_{4}^2 - 12 G_{2} G_{6}\,.
\end{align*}
Using the algebraic approach to Lemma \ref{lem:krosatfay}, as explained in Remark \ref{rem:proveofhoneckersatisfiesfay}, we can derive the above identities in a purely combinatorial manner, in the spirit of \cite{S} or \cite{HST}.


\begin{thebibliography}{99}
\bibitem[B1]{B1}  \textsc{H.~Bachmann},
The algebra of bi-brackets and regularized multiple Eisenstein series. \emph{J. Number Theory}, {\bf 200} (2019), 260--294.

\bibitem[B2]{B2}  \textsc{H.~Bachmann},
q-analogues of multiple zeta values and the formal double Eisenstein space. \emph{Waseda Number Theory Conference Proceedings 2021}, arXiv:2108.08634 [math.NT].

\bibitem[B3]{B3}  \textsc{H.~Bachmann}, Multiple zeta values and modular forms. \emph{Lecture notes (Ver. 5.4)}, Nagoya University 2020. available online at \url{https://www.henrikbachmann.com/mzv2020.html}.

\bibitem[BB]{BB}  \textsc{H.~Bachmann, A.~Burmester},
Combinatorial multiple Eisenstein series. In preparation.

\bibitem[BBK]{BBK}  \textsc{H.~Bachmann, A.~Burmester, U.~K\"uhn},
Relations satisfied by double q-zeta values. \emph{RIMS Kokyuroku No.2160} (2020), p. 219--230.

\bibitem[BIM]{BIM} \textsc{H.~Bachmann, J.W.~van-Ittersum, N.~Matthes},
Formal multiple Eisenstein series. In preparation.

\bibitem[Br]{Brown:Depth3} \textsc{F.~Brown}, Zeta elements in depth 3 and the fundamental Lie algebra of the infinitesimal Tate curve. \emph{Forum Math. Sigma} 5 (2017), Paper No. e1, 56 pp.

\bibitem[BL]{BrownLevin} \textsc{F.C.S.~Brown, A.~Levin}, Multiple elliptic polylogarithms. arXiv:1110.6917.

\bibitem[D]{Drinfeld:QuasiTriangular} \textsc{V.~Drinfeld}, On quasitriangular quasi-Hopf algebras and a group closely connected with $\operatorname{Gal}(\overline{\Q}/\Q)$. \emph{Leningrad Math. J. {\bf 2}} (1991), no. 4, 829--860.

\bibitem[E]{Ecalle:Flexions} \textsc{J.~Ecalle}, The flexion structure and dimorphy: flexion units, singulators, generators, and the enumeration of multizeta irreducibles. (English summary)
With computational assistance from S. Carr. CRM Series, 12, \emph{Asymptotics in dynamics, geometry and PDEs; generalized Borel summation. Vol. II,} 27–211, \emph{Ed. Norm., Pisa,} 2011.
		
\bibitem[GKZ]{GKZ} \textsc{H.~Gangl, M.Kaneko, D. Zagier}, Double zeta values and modular forms. \emph{Automorphic forms and zeta functions}, 71–106, \emph{World Sci. Publ., Hackensack, NJ,} 2006.

\bibitem[HST]{HST} \textsc{M.~Hirose, N.~Sato, K.~Tasaka}, Eisenstein series identities based on partial fraction decomposition. Ramanujan J., {\bf 38}(3) (2015), 455--463.

\bibitem[KZ]{KZ} \textsc{M.~Kaneko, D.~Zagier}, A generalized Jacobi theta function and quasimodular forms. \emph{The moduli space of curves (Texel Island, 1994),} 165–172, Progr. Math., {\bf 129}, \emph{Birkhäuser Boston, Boston, MA,} 1995.

\bibitem[M]{Matthes} \textsc{N.~Matthes}, An algebraic characterization of the Kronecker function. \emph{Res. Math. Sci.} {\bf 6} (2019), no. 3, Paper No. 24, 13 pp.

\bibitem[Mu]{Mumford:Theta} \textsc{D.~Mumford}, Tata lectures on theta. II. Jacobian theta functions and differential equations. With the collaboration of C. Musili, M. Nori, E. Previato, M. Stillman and H. Umemura. Progress in Mathematics, 43. \emph{Birkhäuser Boston, Inc., Boston, MA,} 1984, xiv+272 pp.

\bibitem[P]{Polishchuk} \textsc{A.~Polishchuk}, Classical Yang-Baxter equation and the $A_{\infty}$-constraint. \emph{Adv. Math.} {\bf 168} (2002), no. 1, 56–95.

\bibitem[S]{S} \textsc{N.P.~Skoruppa}, A quick combinatorial proof of Eisenstein series identities. \emph{J. Number Theory,} {\bf 43}(1) (1993), 68--73.

\bibitem[W]{Weil} \textsc{A.~Weil}, Elliptic functions according to Eisenstein and Kronecker. Ergebnisse der Mathematik und ihrer Grenzgebiete, Band 88. \emph{Springer-Verlag, Berlin-New York,} 1976. ii+93 pp.

\bibitem[Z]{Z} \textsc{D. Zagier}, Periods of modular forms and Jacobi theta functions. \emph{Invent. Math.} {\bf 104} (1991), no. 3, 449--465.

\end{thebibliography}
\end{document}